\theoremstyle{plain}
\newtheorem{theorem}{Theorem}[section]
\newtheorem{lemma}[theorem]{Lemma}
\newtheorem{corollary}[theorem]{Corollary}
\theoremstyle{definition}
\newtheorem{condition}[theorem]{Condition}
\theoremstyle{remark}
\newtheorem{remark}[theorem]{Remark}
\newcommand{\bR}{{\mathbb R}}
\newcommand{\pf}{\noindent {\bf Proof. \hspace{2mm}}}
\def\ve{\varepsilon}
\def\th{\theta}
\def\G{\Gamma}
\def\Dl{\Delta}
\def\lt{\left}
\def\rt{\right}
\def\les{\lesssim}
\def\i{\infty}
\def\p{\partial}
\def\f{\frac}
\def\na{\nabla}
\def\al{\alpha}
\def\O{\Omega}
\def\q{\quad}
\def\qq{\qquad}
\def\blue{\color{blue}}
\def\be{\begin{equation}}
\def\ee{\end{equation}}
\def\bes{\begin{equation*}}
\def\ees{\end{equation*}}
\def\bali{\begin{aligned}}
\def\eali{\end{aligned}}
\numberwithin{equation}{section}
\def\dashint{\operatorname%
{\,\,\text{\bf--}\kern-.98em\DOTSI\intop\ilimits@\!\!}}
\begin{document}

%\linenumbers

\title[One component regularity criteria of MHD-Boussinesq]{One component regularity criteria for the axially symmetric MHD-Boussinesq system: criteria on the swirl component of the velocity}

\author[Z. Li]{Zijin Li}
\address[Z. Li]{School of Mathematics and Statistics, Nanjing University of Information Science $\&$ Technology}

\email{zijinli@nuist.edu.cn}

\thanks{Z. Li is supported by Natural Science Foundation of Jiangsu Province (No. SBK2020040176), National Natural Science Foundation of China (No. 12001285), Double Innovation Scheme of Jiangsu Province and the Startup Foundation for Introducing Talent of NUIST (No. 2019r033).}
%\thanks{H. Dong was partially supported by the NSF under agreement DMS-1600593.}

\author[X. Pan]{Xinghong Pan}
\address[X. Pan]{Department of Mathematics, Nanjing University of Aeronautics and Astronautics, Nanjing 211106, China}

\email{xinghong\_87@nuaa.edu.cn}

\thanks{X. Pan is supported by Natural Science Foundation of Jiangsu Province (No. SBK2018041027), Double Innovation Scheme of Jiangsu Province and National Natural Science Foundation of China (No. 11801268).}

\subjclass[2020]{35Q35, 76D03}

\keywords{magnetohydrodynamics, Boussinesq, axially symmetric, regularity criteria, velocity. }

\begin{abstract}
In this paper, we consider regularity criteria of a class of 3D axially symmetric MHD-Boussinesq systems without magnetic resistivity or thermal diffusivity. Under some Prodi-Serrin type critical assumptions on the horizontal angular component of the velocity, we will prove that strong solutions of the axially symmetric MHD-Boussinesq system can be smoothly extended beyond the possible blow-up time $T_*$ if the magnetic field contains only the horizontal swirl component. No a priori assumption on the magnetic field and the temperature fluctuation is imposed.

%Based on the article I of this series \cite{LiPan2020-1}, in this paper we give a critical Prodi-Serrin type regularity criterion with $u^\th$ component of a class of 3D axially symmetric MHD-Boussinesq system. That is, if
%\[
%\int_0^{T_*}\left\|\f{u^\th}{r^{s}}(t,\cdot)\right\|_{L^p}^{q}dt<\infty,\quad \text{where}\quad \f{3}{p}+\f{2}{q}\leq 1+s\text{ for }\f{3}{1+s}<p\leq\infty,\text{ with } s\geq0,
%\]
%or there exists an $\ve_0<<1$, such that
%\[
%\sup_{0\leq t\leq T_*}\left\|\f{u^\th}{r^{s}}(t,\cdot)\right\|_{L^{\f{3}{1+s}}}<\ve_0,
%\]
%then the strong solution of viscous MHD-Boussinesq system can be smoothly extended beyond time $T_*$.

\end{abstract}
%\today
\maketitle

\section{Introduction}
In this paper, we study the 3D MHD-Boussinesq system without magnetic resistivity and thermal diffusivity:
\begin{equation}\label{MB}
\left\{
\begin{aligned}
&
\p_tu+u\cdot\nabla u+\nabla p-\mu\Delta u=h\cdot\nabla h+\rho e_3,\\
&\p_t h+u\cdot\nabla h-h\cdot\nabla u=0,\\
&\p_t\rho+u\cdot\na\rho=0,\\
&\nabla\cdot u=\nabla\cdot h=0.\\
\end{aligned}
\right.
\end{equation}
Here $u\in\bR^3$ is the velocity and $h\in\bR^3$ is the magnetic field, while $p\in\bR$ and $\rho\in\bR$ represent the pressure and the temperature fluctuation, respectively. $e_3=(0,0,1)^T$ is the unit vector in the vertical direction and $\mu>0$ stands for the viscosity constant, which is assumed to be $1$ without loss of generality in the following.

Physically, equation \eqref{MB}$_1$ describes the conservation law of the momentum with the influence of buoyant effect $\rho e_3$, while \eqref{MB}$_2$ is the non-resistive Maxwell-Faraday equation which describes the Faraday's law of induction. The third line of \eqref{MB} represents the ideal temperature fluctuation, while the fourth line describes the incompressibility of the fluid and Gauss's law for magnetism.
The MHD-Boussinesq system, which models the convection of an incompressible conductive flow driven by the Lorenz force and buoyant effect of a thermal field, plays an important role in atmospheric science and geophysical applications. It is closely related to a type of the Rayleigh-B\'{e}nard convection, which occurs in a horizontal layer of conductive fluid heated from below, with a presence of a magnetic field. For detailed physical background, we refer readers to \cite{Pedlosky1987, Marchioro1994, Majda2003, Mulone2003}.

Our main result and its proof will be presented in the cylindrical coordinates $(r,\,\th,\,z)$. For $x=(x_1,\,x_2,\,x_3)\in\mathbb{R}^3$, we denote
\[
r=\sqrt{x_1^2+x_2^2},\q \th=\arctan\frac{x_2}{x_1},\q z=x_3,
\]
and
\[
e_r=\left(\frac{x_1}{r},\frac{x_2}{r},0\right),\quad e_\th=\left(-\frac{x_2}{r},\frac{x_1}{r},0\right),\quad e_z=(0,0,1).
\]
We say a solution of \eqref{MB} is axially symmetric, if and only if
\[
\left\{
\begin{split}
u&=u^r(t,r,z)e_r+u^{\th}(t,r,z)e_{\th}+u^z(t,r,z)e_z,\\
h&=h^r(t,r,z)e_r+h^{\th}(t,r,z)e_{\th}+h^z(t,r,z)e_z,\\
\rho&=\rho(t,r,z),\\
\end{split}
\right.
\]
satisfy the system \eqref{MB}. By the local existence and uniqueness results, it is clear that one only needs to assume $h^r_0=h^z_0\equiv0$, then vanishing of $h^r$ and $h^z$ holds for all time (see \cite{Larios2017}). In this case, \eqref{MB} can be rewritten as
\be\label{MHD-BOUS}
\left\{
\begin{aligned}
&\p_tu^r+(u^r\p_r+u^z\p_z)u^r -\frac{(u^\th)^2}{r}+\p_r P=-\frac{(h^\th)^2}{r}+\left(\Delta-\frac{1}{r^2}\right)u^r, \\[4mm]
&\p_tu^\th+(u^r\p_r+u^z\p_z) u^\th+\frac{u^\th u^r}{r}=\left(\Delta-\frac{1}{r^2}\right)u^\th , \\[4mm]
&\p_tu^z+(u^r\p_r+u^z\p_z)u^z+\p_z P=\Delta u^z+\rho ,                                    \\[4mm]
&\p_th^\th+(u^r\p_r+u^z\p_z)h^\th-\frac{h^\th u^r}{r}=0,\\[4mm]
&\p_t\rho+(u^r\p_r+u^z\p_z)\rho=0,\\[4mm]
&\nabla\cdot u=\p_ru^r+\frac{u^r}{r}+\p_zu^z=0,
\end{aligned}
\right.
\ee
where $P:=p+\f{1}{2}|h^\th|^2$. To state the regularity theorem of the initial value problem of the axially symmetric solution of \eqref{MB}, we present here a Prodi-Serrin type condition on the horizontal swirl component of the velocity:
\begin{condition}\label{COND1}
For any $s \geq 0$,
\be\label{C1.305}
\int_0^{T_*}\left\|\f{u^\th}{r^{s}}(t,\cdot)\right\|_{L^p}^{q}dt<\infty,\quad \text{where}\quad \f{3}{p}+\f{2}{q}\leq 1+s\quad\text{for}\quad\f{3}{1+s}<p\leq\infty.
\ee
\end{condition}

\qed

Meanwhile, for the borderline case $p=\f{3}{1+s}$, we assume that:

\begin{condition}\label{COND2}
For any $s \geq 0$,
\be\label{C1.306}
\sup_{0\leq t\leq T_*}\left\|\f{u^\th}{r^{s}}(t,\cdot)\right\|_{L^{\f{3}{1+s}}}<\ve_0,
\ee
where $\ve_0=\ve_0\left(s,\,ru^\th_0\right)<<1$ will be decided in the proof of Theorem \ref{th0}.
\end{condition}

\qed

Now we are ready for the main result:
\begin{theorem}\label{th0}
 Let $(u,h,\rho)$ be an axially symmetric solution of \eqref{MB} whose initial data $(u_0,h_0,\rho_0)\in H^m(\mathbb{R}^3)$, for $m\geq3$, and $\na\cdot u_0=h^r_0=h^z_0=0$. Then $(u,h,\rho)$ can be smoothly extended beyond $T_*$ if and only if Condition \ref{COND1} or Condition \ref{COND2} holds.
\end{theorem}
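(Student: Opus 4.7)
The plan is to prove the nontrivial direction of Theorem \ref{th0}: assuming Condition \ref{COND1} or \ref{COND2}, show that $(u,h,\rho)$ stays in $H^m$ up to $T_*$. I proceed in three stages, exploiting the structural simplifications of the axisymmetric, resistivity-free, thermally non-diffusive setting. Since $h^r \equiv h^z \equiv 0$, equation \eqref{MHD-BOUS}$_4$ rearranges to $\p_t(h^\th/r) + (u^r\p_r + u^z\p_z)(h^\th/r) = 0$, and together with $\p_r(ru^r) + \p_z(ru^z) = 0$ this conserves $\|h^\th/r\|_{L^p(\bR^3)}$ for every $p\in[1,\infty]$; similarly $\|\rho\|_{L^p(\bR^3)}$ is conserved. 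The Bernoulli quantity $\G := r u^\th$ satisfies the same convection-diffusion equation as for axisymmetric Navier--Stokes (the magnetic field enters only through the pressure of the $u^r$-equation and does not touch the $u^\th$-equation), so the maximum principle gives $\|r u^\th\|_{L^\infty} \leq \|r u^\th_0\|_{L^\infty}$. Combined with the coupled $u$-$h$ energy identity (with $\int\rho u^z$ handled by Young using $\|\rho\|_{L^2}$-conservation), this yields $u\in L^\infty_tL^2\cap L^2_t\dot H^1$ and $h \in L^\infty_tL^2$.

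Next I reduce Condition \ref{COND1}/\ref{COND2} to a classical Prodi--Serrin bound on $u^\th$ itself. Holder interpolation between the two $L^\infty$ controls $\|r u^\th\|_{L^\infty}$ and $\|u^\th/r^s\|_{L^p}$ yields
\bes
\|u^\th\|_{L^{p(1+s)}} \les \|r u^\th\|_{L^\infty}^{s/(1+s)}\,\|u^\th/r^s\|_{L^p}^{1/(1+s)},
\ees
from which Condition \ref{COND1} translates into $u^\th \in L^{q(1+s)}_t L^{p(1+s)}_x$ with $3/[p(1+s)] + 2/[q(1+s)] \leq 1$ and $p(1+s) > 3$, and Condition \ref{COND2} becomes the endpoint smallness $\|u^\th\|_{L^\infty_t L^3_x} \leq C\varepsilon_0^{1/(1+s)}\|r u^\th_0\|_{L^\infty}^{s/(1+s)}$. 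It therefore suffices to close the proof under a Prodi--Serrin assumption on $u^\th$.

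The core estimate is an $L^2$-energy inequality for the angular vorticity $\o^\th := \p_z u^r - \p_r u^z$, which solves
\be\label{eqomega}
\p_t\o^\th + u\cdot\na\o^\th - \frac{u^r\o^\th}{r} - \Big(\Delta - \frac{1}{r^2}\Big)\o^\th = \frac{\p_z[(u^\th)^2 - (h^\th)^2]}{r} - \p_r\rho.
\ee
Testing against $\o^\th$ in $L^2(\bR^3)$ produces the dissipation $\|\na\o^\th\|_{L^2}^2 + \|\o^\th/r\|_{L^2}^2$; the classical stretching $\int u^r(\o^\th)^2/r$ is absorbed via the Hardy inequality and Biot--Savart. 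The swirl source becomes $-2\int u^\th\,\p_z u^\th\,\o^\th/r$ after integration by parts and is absorbed into the dissipation by Holder--Gagliardo--Nirenberg at the Prodi--Serrin scaling obtained above, with the smallness $\varepsilon_0$ providing the absorption margin in the endpoint case. The magnetic source $-\p_z[(h^\th)^2]/r$ is rewritten using $\eta = h^\th/r$ (whose $L^\infty$-norm is conserved) and integrated by parts against $h^\th \in L^\infty_tL^2$. The thermal source $-\p_r\rho$ is integrated by parts onto $\o^\th$ and absorbed using only $\|\rho\|_{L^2}$-conservation. Gronwall then gives $\o^\th \in L^\infty_tL^2\cap L^2_tH^1$; the axisymmetric Biot--Savart formula for the divergence-free field $u^r e_r + u^z e_z$ together with the parabolic $u^\th$-estimate upgrades this to $\na u \in L^2_tL^\infty_x$ (up to a Beale--Kato--Majda logarithmic correction), which propagates $H^1$-regularity of $h^\th$ and $\rho$ through their transport equations, and a routine commutator bootstrap extends to $H^m$.

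The main obstacle I anticipate lies in \eqref{eqomega}: the magnetic and thermal sources carry no dissipative cushion, so their bounds must be extracted purely from the transport-conservation laws for $\eta$ and $\rho$, while simultaneously leaving enough dissipative margin to absorb the critical swirl source at the sharp Prodi--Serrin scale; in the endpoint case of Condition \ref{COND2}, the interpolation constants must be tracked explicitly to fix $\varepsilon_0 = \varepsilon_0(s, r u^\th_0)$ so that this absorption indeed succeeds.
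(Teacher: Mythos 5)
Your outline reproduces the paper's conservation laws, its treatment of the magnetic and thermal sources in the $w^\th$ equation, and its final bootstrap, and your reduction of Conditions \ref{COND1}--\ref{COND2} to a classical Prodi--Serrin bound on $u^\th$ via the factorization $|u^\th|=|ru^\th|^{s/(1+s)}|u^\th/r^s|^{1/(1+s)}$ is correct (the paper uses the same pointwise identity in its Case II). The gap is in your core step: you run the $L^2$ energy estimate directly on $\o^\th$ and claim the stretching term $\int \f{u^r}{r}(\o^\th)^2\,dx$ is ``absorbed via the Hardy inequality and Biot--Savart.'' It is not. The best such a bound gives is
\bes
\int \f{u^r}{r}(\o^\th)^2\,dx\;\leq\;\|u^r\|_{L^3}\Big\|\f{\o^\th}{r}\Big\|_{L^2}\|\o^\th\|_{L^6}\;\leq\;\ve\|\na\o^\th\|_{L^2}^2+C\|u^r\|_{L^3}^2\Big\|\f{\o^\th}{r}\Big\|_{L^2}^2,
\ees
and $\|u^r\|_{L^3}^2$ is neither small nor integrable against $\|\o^\th/r\|_{L^2}^2$ unless $\|\o^\th/r\|_{L^\infty_tL^2}$ is \emph{already} known to be finite; the Gronwall loop does not close. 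Your hypothesis controls only $u^\th$, not $u^r$ or $b$, so this term is genuinely supercritical for a direct $\o^\th$ estimate. A related problem afflicts your swirl source: after your integration by parts it involves $\p_zu^\th/r=-w^r/r$, a quantity carrying no dissipation in the $\o^\th$ equation.

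The missing idea is the paper's intermediate layer: one must first close an estimate for the rescaled pair $\O=w^\th/r$ and $J=w^r/r$, for which the stretching term disappears (it is converted into the favorable drift $\f{2}{r}\p_r$ inside the diffusion), the swirl source becomes exactly $2\f{u^\th}{r}J$ where Condition \ref{COND1}/\ref{COND2} applies through Lemma \ref{CFZ}, and the source in the $J$ equation is tamed by $\|\na(u^r/r)\|_{L^p}\les\|\O\|_{L^p}$. Moreover, because $\rho$ is not diffused, the source $-\p_r\rho/r$ in the $\O$ equation cannot be integrated by parts the way $-\p_r\rho$ can in the $w^\th$ equation; this forces the Hmidi--Rousset substitution $L=\O-\mathcal{L}\rho$ with $\mathcal{L}=\left(\Dl+\f{2}{r}\p_r\right)^{-1}\f{\p_r}{r}$, which your plan omits entirely. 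Only after $\sup_t\|(L,J)\|_{L^2}$, hence $\sup_t\|\O\|_{L^2}$, is in hand does the direct $w^\th$ energy estimate you describe become closable; it must then be followed by the $w^r,w^z$ estimates and the maximal-regularity step, since $\o^\th\in L^2_TH^1$ alone yields only $\na b\in L^2_TL^6$, not the $L^1_TL^\infty$ control of $\na u$ needed for the transport equations of $h^\th$ and $\rho$.
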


\qed

Critical regularity criteria of incompressible fluid dynamic systems date back to pioneer works of  G. Prodi \cite{Prodi1959} and J. Serrin \cite{Serrin1962, Serrin1963} around the 1960s, where the famous Prodi-Serrin criterion for 3D Naiver-Stokes equations is given. Readers can see \cite{Fabes1972, Giga1986, Struwe1988, Takahashi1990, Veiga1995} for more regularity results on the Navier-Stokes equations.

If the fluid (say e.g. plasma) is affected by the Lorentz force, then the Navier-Stokes system is generalized to the magnetohydrodynamics system. Many fruitful studies and researches on the partial regularity and blow up criteria of the MHD system had been achieved in recent years, see e.g. \cite{HeXin2005-1,ChenMiaoZhang2008, RWXZ:2014JFA, LXZ:JDE2015, Lei2015-2} and references therein. Moreover, if the fluid is influenced by the temperature, then the fluid equations can be modeled by the classical Boussinesq system. See \cite{HL:2005DCDS, Chae:2006ADV, HR:2010AIHP, AHK:2011DCDS, Larios2013, CaoWu2013} and references therein for more regularity results on the Boussinesq system.

Recently there are more and more studies concerning the full 3D MHD-Boussinesq system. We refer readers to \cite{Larios2017, LiuBianPu2019, BianPu2020, Pan2019}, etc. for the regularity criteria, local and global well-posedness of weak and strong solutions of the MHD-Boussinesq system. The local well-posedness results were proved in Larios-Pei \cite{Larios2017}.  If a nonlinear damping term was added in the momentum equations, Liu-Bian-Pu \cite{LiuBianPu2019} proved the global well-posedness of strong solutions. Recently, Bian-Pu \cite{BianPu2020} proved the global regularity of axially symmetric large solutions to the MHDB system \eqref{MHD-BOUS} without the horizontal swirl component $u^\th$ of the velocity under the assumption that the support of the initial thermal fluctuation is away from the $z$-axis and its projection on to the $z$-axis is compact. Later, this result was improved in Pan \cite{Pan2019} by removing the ``support set'' assumption on the initial data of the thermal fluctuation.
%In \cite{LiPan2020}, the authors derive several Prodi-Serrin type criteria and BKM type criteria for viscous and inviscid MHD-Boussinesq system.
%Also a global well-posed result for 3D MHD-Boussinesq system with partially small initial data is derived. In \cite{LiPan2020-1}, the authors gave some critical regularity criteria, which depend only one component of the vorticity, for a class of 3D MHD-Boussinesq system that is viscous or inviscid.

Throughout the paper, $C_{a,b,c,...}$ denotes a positive constant depending on $a,\,b,\, c,\,...$ which may be different from line to line. We also apply $A\lesssim B$ to denote $A\leq CB$. Meanwhile, $A\simeq B$ means both $A\lesssim B$ and $B\lesssim A$. $[\mathcal{A},\,\mathcal{B}]=\mathcal{A}\mathcal{B}-\mathcal{B}\mathcal{A}$ denotes the communicator of the operator $\mathcal{A}$ and the operator $\mathcal{B}$. For any Banach space $X$, we say $v:\,[0,T]\times\mathbb{R}^3\to\mathbb{R}$ belongs to the Bochner space $L^p(0,T;X)$, if
\[
\|v(t,\cdot)\|_{X}\in L^p(0,T),
\]
and we usually use $L^p_TX$ for short notation of $L^p(0,T;X)$.

 Our proof of the main result in this paper consists of the following steps: First, we investigate a reformulated system \eqref{REFORM} which is motivated by \cite{HR:2010AIHP,ChenFangZhang2017} and derive a closed $L^\infty_TL^2\cap L^2_TH^1$ estimate of \eqref{REFORM} under the condition \eqref{C1.305} or \eqref{C1.306}. Based on this estimate, we further derive the $L^\infty_TL^2\cap L^2_TH^1$ estimate of $\nabla u$, \eqref{EVORTI}. Using the maximal regularity result of the heat flow, then we arrive the $L^1_TL^\infty$ estimate of $\nabla u$ \eqref{ENU}, then the $L^1_TL^\infty$ estimates of $\nabla\times h$ and $\nabla\rho$, \eqref{ENH} and \eqref{ERHO}, follow. Finally, using these $L^1_TL^\infty$ estimates, the estimates of higher-order norms of the solution follow from a classical communicator estimate by Kato-Ponce \cite{Kato1988}.

The remaining of this paper is organized as follows. In Section 2, we provide some useful Lemmas concerning interpolation inequalities, some $L^p$ boundedness of a singular operator which is related to the temperature fluctuation in the MHD-Boussinesq system, a Hardy type inequality, and logarithmic imbedding inequalities. Finally, in Section 3, we provide the proof of regularity criterion Theorem \ref{th0}.

\section{Preliminaries}
At the beginning, let us introducte the well-known $Gagliardo-Nirenberg$ interpolation inequality. We list here without proof.
\begin{lemma}[Gagliardo-Nirenberg]\label{LEMGN}
Fix $q,r\in[1,\i]$ and $j,m\in\mathbb{N}\cup\{0\}$ with $j\leq m$. Suppose that $f\in L^q(\mathbb{R}^d)\cap\dot{W}^{m,r}(\mathbb{R}^d)$ and there exists a real number $\al\in[j/m,1]$ such that
\[
\frac{1}{p}=\frac{j}{d}+\al\left(\frac{1}{r}-\frac{m}{d}\right)+\frac{1-\al}{q}.
\]
Then $f\in\dot{W}^{j,p}(\mathbb{R}^d)$ and there exists a constant $C>0$ such that
\[
\|\na^jf\|_{L^p(\mathbb{R}^d)}\leq C\|\na^m f\|^\al_{L^r(\mathbb{R}^d)}\|f\|^{1-\al}_{L^q(\mathbb{R}^d)},
\]
except the following two cases:

(i) $j=0$, $mr<d$ and $q=\infty$; (In this case it is necessary to assume also that either $|u|\to 0$ at infinity, or $u\in L^s(\mathbb{R}^d)$ for some $s<\infty$.)

(ii) $1<r<\infty$ and $m-j-d/r\in\mathbb{N}$. (In this case it is necessary to assume also that $\alpha<1$.)

\end{lemma}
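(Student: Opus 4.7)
The plan is to follow the standard Littlewood--Paley proof of Gagliardo--Nirenberg, which handles the full range of exponents uniformly and makes the role of the scaling relation and the exceptional cases transparent. First, I would verify that the exponent identity $\tfrac{1}{p}=\tfrac{j}{d}+\alpha\bigl(\tfrac{1}{r}-\tfrac{m}{d}\bigr)+\tfrac{1-\alpha}{q}$ is exactly the scaling condition obtained by applying the dilation $f_\lambda(x):=f(\lambda x)$ to both sides of the claimed inequality: both sides must transform as the same power of $\lambda$, forcing the stated formula. This both motivates the shape of the estimate and shows that no homogeneous inequality of this form can hold if the identity fails.

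Next I would introduce a Littlewood--Paley decomposition $f=\sum_{k\in\mathbb{Z}}\Delta_k f$ with $\Delta_k$ localizing frequencies to an annulus of size $2^k$, and apply Bernstein's inequality in its two standard forms on each dyadic block:
\[
\|\nabla^j\Delta_k f\|_{L^p}\lesssim 2^{kj}\,2^{kd(1/q-1/p)_+}\|\Delta_k f\|_{L^q},\qquad
\|\nabla^j\Delta_k f\|_{L^p}\lesssim 2^{k(j-m)}\,2^{kd(1/r-1/p)_+}\|\nabla^m\Delta_k f\|_{L^r}.
\]
The first bound is effective for low frequencies (small $k$) and the second for high frequencies (large $k$). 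I would split $\sum_k$ at a cutoff $k=K$ to be chosen, use the first estimate for $k\le K$ and the second for $k>K$, and sum geometric series. The exponent identity guarantees that the low- and high-frequency exponents on $2^k$ differ in sign, so the sums converge; optimising in $K$ then balances the $L^q$ norm of $f$ against the $\dot W^{m,r}$ norm of $f$ with the correct weights $1-\alpha$ and $\alpha$, yielding the desired interpolation $\|\nabla^j f\|_{L^p}\lesssim \|\nabla^m f\|_{L^r}^\alpha\|f\|_{L^q}^{1-\alpha}$.

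The main obstacles are the two exceptional cases. In case (i), with $j=0$, $mr<d$, $q=\infty$, the endpoint Bernstein estimate into $L^\infty$ fails to give a summable geometric series without additional decay or integrability, which is why the extra hypothesis (vanishing at infinity, or $f\in L^s$ for some finite $s$) is needed to select a unique representative and control low frequencies; one then runs the same dyadic argument after replacing the $L^\infty$ information by the auxiliary $L^s$ control. In case (ii), with $1<r<\infty$ and $m-j-d/r\in\mathbb{N}$, the critical Sobolev embedding $\dot W^{m-j,r}\hookrightarrow L^\infty$ fails logarithmically, so the endpoint $\alpha=1$ version of the inequality is false; this is exactly why the statement demands $\alpha<1$ in that case, and the restriction $\alpha\in[j/m,1)$ leaves a nontrivial low-frequency gap that again makes the geometric sums converge.

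Finally, I would observe that the lower bound $\alpha\ge j/m$ is automatic from the scaling identity whenever $r\le p$ and $q\ge p$ and otherwise must be imposed to ensure both Bernstein exponents have the correct sign; a short case analysis (depending on whether $1/q-1/p$ and $1/r-1/p$ are positive or negative) completes the argument. No additional results beyond Bernstein's inequality, Littlewood--Paley theory, and elementary optimisation are needed, and the proof can be found in standard references, which is presumably why the authors cite it without proof.
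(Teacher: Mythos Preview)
The paper does not prove this lemma at all: it is stated explicitly as a well-known result and listed ``without proof'' (the sentence immediately preceding the lemma reads ``We list here without proof''). Your final sentence already anticipated this correctly. Your Littlewood--Paley sketch is one of the standard routes to the full Gagliardo--Nirenberg inequality and is broadly sound as an outline; if you were to flesh it out, the main places requiring care are the endpoint exponents $p,q,r\in\{1,\infty\}$, where the square-function characterisation of $L^p$ is unavailable and one must argue directly with Bernstein and geometric summation rather than via Littlewood--Paley equivalence of norms. Since the paper treats the lemma as a black box, there is nothing further to compare.
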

In the following we state a useful space-time interpolation which is frequently used in the research of Navier-Stokes equations:
\begin{lemma}\label{LEM2.3}
If $u \in L^{\infty}\left(0, T ; L^2(\mathbb{R}^3)\right) \cap L^{2}\left(0, T ; \dot{H}^1(\mathbb{R}^3)\right)$, then
\be\label{Vorti222}
u \in L^{q}\left(0, T ; L^{p}(\mathbb{R}^3)\right),
\ee
where
\[
\frac{2}{q}+\frac{3}{p}\geq\frac{3}{2}, \quad 2 \leq p \leq 6.
\]
\end{lemma}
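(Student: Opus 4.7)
The plan is to combine a spatial Gagliardo--Nirenberg interpolation on each time slice with integration in time, exploiting the $L^\infty_TL^2$ and $L^2_T\dot{H}^1$ bounds separately.

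First, I would apply Lemma \ref{LEMGN} with $j=0$, $d=3$, $m=1$, and both integrability exponents equal to $2$. A direct computation of the scaling relation gives, for every $p\in[2,6]$,
\[
\|u(t,\cdot)\|_{L^p(\mathbb{R}^3)}\leq C\|\nabla u(t,\cdot)\|_{L^2(\mathbb{R}^3)}^{\al}\|u(t,\cdot)\|_{L^2(\mathbb{R}^3)}^{1-\al},\qquad \al=\f{3}{2}-\f{3}{p}\in[0,1],
\]
for a.e.\ $t\in(0,T)$.

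Next, I would raise both sides to the power $q$, integrate over $(0,T)$, and pull the uniform-in-time $L^2$ norm outside:
\[
\int_0^T\|u(t,\cdot)\|_{L^p}^q\,dt\leq C\|u\|_{L^\infty_TL^2}^{q(1-\al)}\int_0^T\|\nabla u(t,\cdot)\|_{L^2}^{q\al}\,dt.
\]
The scaling hypothesis $\f{2}{q}+\f{3}{p}\geq\f{3}{2}$ is equivalent to $q\al\leq 2$. In the equality case $q\al=2$, the time integral is bounded directly by $\|u\|_{L^2_T\dot{H}^1}^2$. In the inequality case $q\al<2$, I would apply H\"older's inequality in time,
\[
\int_0^T\|\nabla u\|_{L^2}^{q\al}\,dt\leq T^{1-q\al/2}\lt(\int_0^T\|\nabla u\|_{L^2}^2\,dt\rt)^{q\al/2},
\]
which is finite by the $L^2_T\dot{H}^1$ hypothesis. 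Combining the two bounds yields \eqref{Vorti222}.

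There is essentially no obstacle: the only minor points are checking that the Gagliardo--Nirenberg exception cases do not apply (they do not, since $mr=2<3=d$ forces $j=0$ only in conjunction with $q=\infty$, which is not our case, and $m-j-d/r=1-0-3/2\notin\mathbb{N}$), and distinguishing the scale-invariant endpoint $\f{2}{q}+\f{3}{p}=\f{3}{2}$ from the subcritical case where H\"older in time absorbs the deficit.
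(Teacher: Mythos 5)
Your proof is correct and is essentially the paper's own argument: the paper interpolates the $L^p$ norm between $L^2$ and $L^6$ and then invokes Sobolev's inequality $\|u\|_{L^6}\lesssim\|\nabla u\|_{L^2}$, which is exactly your Gagliardo--Nirenberg step with $\alpha=\frac{3}{2}-\frac{3}{p}=\frac{3p-6}{2p}$. The only (harmless) difference is that you spell out the H\"older-in-time absorption of the deficit in the subcritical case $q\alpha<2$, using $T<\infty$, which the paper leaves implicit.
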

\begin{proof}
The Sobolev inequality implies $u\in L^{2}\left(0, T ; L^6(\mathbb{R}^3)\right)$. Then we interpolate the $L^{s}$ norm between $L^{2}$ and $L^{6}$ to derive
\[
\|u\|_{L^{p}} \leq\|u\|_{L^{2}}^{(6-p) / 2 p}\|u\|_{L^{6}}^{(3 p-6) / 2 p}.
\]
This indicates
\[
\int_{0}^{T}\|u\|_{L^{p}}^{q}dt \leq \int_{0}^{T}\|u\|_{L^{2}}^{(6-p) q / 2 p}\|u\|_{L^{6}}^{(3 p-6) q / 2 p}dt.
\]
Since $u\in L^{\infty}\left(0,T;L^{2}\right)\cap L^{2}\left(0,T; L^{6}\right)$, the integral on the right-hand side of the above inequality is bounded when $(3 p-6) q / 2 p \leq 2$, which corresponds to
\[
\frac{2}{q}+\frac{3}{p} \geq \frac{3}{2}.
\]
\end{proof}

Next, we focus on the following estimates of a triple product form with commutator:
\begin{lemma}\label{LEMET1}
Let $m\in\mathbb{N}$, $m\geq 2$, and $f,g,k\in C^\infty_0(\mathbb{R}^3)$. Then the following estimate holds:
\be\label{E1}
\begin{split}
\left|\int_{\mathbb{R}^3}[\nabla^m,\,f\cdot\nabla]g\nabla^m kdx\right|\leq&\,C\,\left\|\nabla^{m}(f,g,k)\right\|_{L^2}^2\|\nabla (f,\,g)\|_{L^\infty}.
\end{split}
\ee
\end{lemma}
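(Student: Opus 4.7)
The plan is to reduce the integral estimate to the classical Kato--Ponce commutator bound
\[
\|[\nabla^m,\,f\cdot\nabla]g\|_{L^2}\leq C\,\bigl(\|\nabla^m f\|_{L^2}+\|\nabla^m g\|_{L^2}\bigr)\,\|\nabla(f,g)\|_{L^\infty},
\]
and then finish by Cauchy--Schwarz and Young's inequality against $\nabla^m k$.

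First, I would expand the commutator via the Leibniz rule. Writing $|\beta|=m$, one has $\nabla^\beta(f\cdot\nabla g)=\sum_\gamma\binom{\beta}{\gamma}\nabla^\gamma f\cdot\nabla^{\beta-\gamma}\nabla g$, and the $\gamma=0$ contribution cancels against $f\cdot\nabla\nabla^\beta g$. Grouping by $k=|\gamma|$ gives a finite sum of tensorial terms of the schematic form $\nabla^k f\otimes\nabla^{m-k+1}g$ for $k=1,\dots,m$, so it suffices to bound each of these in $L^2$.

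For each such $k$ I would apply H\"older's inequality with $\tfrac{1}{p}+\tfrac{1}{q}=\tfrac{1}{2}$, choosing $p=\tfrac{2(m-1)}{k-1}$ and $q=\tfrac{2(m-1)}{m-k}$ (interpreted as $\infty$ when the denominator vanishes; the endpoints $k=1$ and $k=m$ are trivial and require no interpolation). For the interior indices, Lemma \ref{LEMGN} applied to $\nabla f$ and to $\nabla g$ (so the ambient derivative order is $m-1$, and $d=3$, $r=2$) gives the scale-invariant interpolations
\[
\|\nabla^k f\|_{L^p}\leq C\,\|\nabla^m f\|_{L^2}^{(k-1)/(m-1)}\|\nabla f\|_{L^\infty}^{(m-k)/(m-1)},
\]
\[
\|\nabla^{m-k+1} g\|_{L^q}\leq C\,\|\nabla^m g\|_{L^2}^{(m-k)/(m-1)}\|\nabla g\|_{L^\infty}^{(k-1)/(m-1)},
\]
where the exponent $\alpha=(k-1)/(m-1)$ lies in the admissible range of Lemma \ref{LEMGN}, and the excluded borderline cases (i), (ii) of that lemma are avoided (case (i) needs $j=0$, which occurs only at the trivial endpoint, and case (ii) would require $(m-1)-j-\tfrac{3}{2}\in\mathbb{N}$, a half-integer condition that never holds). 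Multiplying the two estimates and using Young's inequality with conjugate exponents $(m-1)/(k-1)$ and $(m-1)/(m-k)$ collapses the resulting power product into $C\bigl(\|\nabla^m f\|_{L^2}+\|\nabla^m g\|_{L^2}\bigr)\|\nabla(f,g)\|_{L^\infty}$, which establishes the intermediate Kato--Ponce bound.

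Finally, Cauchy--Schwarz on the integral against $\nabla^m k$ together with $2ab\leq a^2+b^2$ absorbs $\|\nabla^m k\|_{L^2}$ into $\|\nabla^m(f,g,k)\|_{L^2}^2$, yielding \eqref{E1}. The main obstacle is the combinatorial bookkeeping in the interior range $2\leq k\leq m-1$, i.e., checking that the chosen $(p,q)$ pair and the Gagliardo--Nirenberg exponents fit together for every such $k$; once that is verified, the rest is a mechanical application of H\"older and Young.
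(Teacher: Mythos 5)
Your proposal is correct and follows the same two-step structure as the paper's proof: reduce via H\"older's (Cauchy--Schwarz) inequality to the bound $\|[\nabla^m, f\cdot\nabla]g\|_{L^2}\leq C\left(\|\nabla f\|_{L^\infty}\|\nabla^m g\|_{L^2}+\|\nabla g\|_{L^\infty}\|\nabla^m f\|_{L^2}\right)$, which is exactly the Kato--Ponce commutator estimate the paper cites from \cite{Kato1988}. The only difference is that you prove that estimate from scratch (Leibniz expansion, Gagliardo--Nirenberg interpolation of the interior terms with the exponents $p=\tfrac{2(m-1)}{k-1}$, $q=\tfrac{2(m-1)}{m-k}$, and Young's inequality), and your exponent bookkeeping and the exclusion of the borderline cases of Lemma \ref{LEMGN} both check out, so this is a valid self-contained substitute for the citation.
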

\begin{proof}
Applying H\"{o}lder's inequality, one derives
\be\label{2.77777}
\left|\int_{\mathbb{R}^3}[\nabla^m,\,f\cdot\nabla]g\nabla^m kdx\right|\leq\|[\nabla^m,\,f\cdot\nabla]g\|_{L^{2}}\|\nabla^mk\|_{L^2}.
\ee
Due to the commutator estimate by Kato-Ponce \cite{Kato1988}, it follows that
\be\label{2.88888}
\|[\nabla^m,\,f\cdot\nabla]g\|_{L^{2}}\leq C\left(\|\nabla f\|_{L^\infty}\|\nabla^mg\|_{L^2}+\|\nabla g\|_{L^\infty}\|\nabla^m f\|_{L^2}\right).
\ee
Then \eqref{E1} follows from plugging \eqref{2.88888} into \eqref{2.77777}.

\end{proof}

\begin{lemma}\label{LEMET11}
Denote $\mathcal{L}=\left(\Delta+\frac{2}{r} \partial_{r}\right)^{-1} \frac{\partial_{r}}{r}$ and $\tilde{\mathcal{L}}=\left(\Delta+\frac{2}{r} \partial_{r}\right)^{-1} \frac{\partial_{z}}{r} .$ Suppose $\rho \in H^{2}\left(\mathbb{R}^{3}\right)$ be axisymmetric, then for every $p \in[2,+\infty),$ there
exists an absolute constant $C_{p}>0$ such that
\[
\|\mathcal{L} \rho\|_{L^{p}} \leq C_{p}\|\rho\|_{L^{p}}, \quad\|\tilde{\mathcal{L}} \rho\|_{L^{p}} \leq C_{p}\|\rho\|_{L^{p}}.
\]
Moreover, for any smooth axisymmetric function $f,$ we have the identity
\[
\mathcal{L} \partial_{r} f=\frac{f}{r}-\mathcal{L}\left(\frac{f}{r}\right)-\partial_{z} \tilde{\mathcal{L}} f.
\]
\end{lemma}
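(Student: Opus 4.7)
The plan is to derive explicit representations of $\mathcal{L}\rho$ and $\tilde{\mathcal{L}}\rho$ in terms of the Newtonian potential $A:=(-\Delta)^{-1}\rho$ on $\mathbb{R}^3$, and then reduce the $L^p$ bounds to the classical boundedness of Riesz transforms $R_iR_j=\partial_i\partial_j(-\Delta)^{-1}$. For $\mathcal{L}$, I would first record the algebraic identity $(\Delta-r^{-2})(r\phi)=r(\Delta+\tfrac{2}{r}\partial_r)\phi$, which, applied to $\phi=\mathcal{L}\rho$, yields $(\Delta-r^{-2})(r\mathcal{L}\rho)=\partial_r\rho$. The swirl field $W:=r\mathcal{L}\rho\,e_\theta$ then satisfies $\Delta W=\partial_r\rho\,e_\theta=-\nabla\times(\rho e_3)$ (the second equality is a short cylindrical curl computation for axisymmetric $\rho$). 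Inverting $\Delta$ and commuting the curl through gives $W=\nabla\times(A\,e_3)=-\partial_r A\,e_\theta$, hence
\[
\mathcal{L}\rho=-\frac{1}{r}\partial_r A.
\]

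To turn this into an $L^p$ bound, I would expand $r^{-1}\partial_r A$ in Cartesian coordinates. Since $A$ is axisymmetric, the chain-rule identities $\partial_r^2 A=r^{-2}\sum_{i,j=1,2}x_ix_j\partial_i\partial_j A$ and $\partial_1^2 A+\partial_2^2 A=\partial_r^2 A+r^{-1}\partial_r A$ combine to give
\[
\mathcal{L}\rho=-\frac{x_2^2\,\partial_1^2 A-2x_1x_2\,\partial_1\partial_2 A+x_1^2\,\partial_2^2 A}{r^{2}}.
\]
Since the angular weights $x_i^2/r^2$ and $|x_1x_2|/r^2$ are pointwise bounded by $1$ and $\partial_i\partial_j A=R_iR_j\rho$, the $L^p$-boundedness of the Riesz transforms for $1<p<\infty$ gives $\|\mathcal{L}\rho\|_{L^p}\le\sum_{i,j=1,2}\|R_iR_j\rho\|_{L^p}\lesssim_p\|\rho\|_{L^p}$. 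For $\tilde{\mathcal{L}}$ the analogous Biot--Savart derivation, based on $\partial_z\rho\,e_\theta=\nabla\times(\rho e_r)$ and the inversion of the vector Laplacian on $\rho e_r$, produces $\tilde{\mathcal{L}}\rho=-\partial_z g/r$, where $g$ is the axisymmetric scalar determined by $g\cos\theta=(-\Delta)^{-1}(\rho\cos\theta)$ (equivalently $(\Delta-r^{-2})g=-\rho$); a parallel Cartesian expansion, combined with an axisymmetric Hardy inequality to absorb the $r^{-1}$ division, reduces its $L^p$ norm to Riesz transforms of bounded multiples of $\rho$.

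For the commutator identity, I would apply $\mathcal{A}:=\Delta+\tfrac{2}{r}\partial_r$ to both sides. By the defining relation $\mathcal{A}\mathcal{L}=\partial_r/r$, the left-hand side collapses to $\partial_r^2 f/r$. On the right, using $\mathcal{A}\partial_z=\partial_z\mathcal{A}$ and $\mathcal{A}\tilde{\mathcal{L}}=\partial_z/r$, the expression becomes $\mathcal{A}(f/r)-\partial_r(f/r)/r-\partial_z^2 f/r$. A direct differentiation gives
\[
\mathcal{A}(f/r)=\frac{\partial_r^2 f+\partial_z^2 f}{r}+\frac{\partial_r f}{r^2}-\frac{f}{r^3},\qquad\frac{\partial_r(f/r)}{r}=\frac{\partial_r f}{r^2}-\frac{f}{r^3},
\]
so the four $r^{-2}$ and $r^{-3}$ terms cancel in pairs and the right-hand side also reduces to $\partial_r^2 f/r$, completing the verification.

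The main obstacle I anticipate is making the $\tilde{\mathcal{L}}$ bound fully rigorous: unlike the $\mathcal{L}$ case, the Biot--Savart source $\rho e_r$ has Cartesian components $\rho x_i/r$ that are not axisymmetric scalars (they live in the $m=1$ angular mode), so the intermediate potentials $(-\Delta)^{-1}(\rho x_i/r)$ only become axisymmetric after pairing against the angular factors $x_i/r$. Extracting the axisymmetric object $\partial_z g$ cleanly, controlling the $r^{-1}$ division uniformly near the axis $\{r=0\}$ via an axisymmetric Hardy inequality, and verifying that the resulting constant depends only on $p$, is where the assumption $\rho\in H^2$ and the Hardy-type lemma from the preliminaries intervene.
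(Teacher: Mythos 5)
The paper does not actually prove this lemma: the ``proof'' is a citation to Propositions 3.1, 3.2 and Lemma 3.3 of the Hmidi--Rousset reference, so your proposal has to stand on its own, and it does. The two structural identities you rely on are correct: $(\Delta-r^{-2})(r\phi)=r(\Delta+\tfrac{2}{r}\partial_r)\phi$ turns $\Delta+\tfrac2r\partial_r$ into the vector Laplacian on swirl fields, and the resulting representation $\mathcal{L}\rho=-\tfrac1r\partial_r(-\Delta)^{-1}\rho$ (equivalently $\mathcal{L}=\tfrac{\partial_r}{r}\Delta^{-1}$ on axisymmetric functions) can be checked directly from $(\Delta-r^{-2})\partial_rA=\partial_r\Delta A$. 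Your Cartesian expansion then exhibits $\mathcal{L}\rho$ as a combination of $R_iR_j\rho$ with angular weights bounded by $1$, which gives the bound for all $1<p<\infty$, slightly more than the stated range. The verification of the commutator identity by applying $\Delta+\tfrac2r\partial_r$ to both sides is also correct; one should only add the remark that this operator is injective on decaying axisymmetric functions so that the identity can be read off from the images.

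The one place where your sketch is softer than necessary is the $\tilde{\mathcal{L}}$ bound: no Hardy inequality is needed, because the division by $r$ is absorbed entirely by the angular weights. With $g$ defined by $(\Delta-r^{-2})g=-\rho$, so that $(-\Delta)^{-1}(\rho\cos\theta)=g\cos\theta$ and $(-\Delta)^{-1}(\rho\sin\theta)=g\sin\theta$, and $G:=\partial_zg$, a direct computation gives
\[
\frac{x_2^2}{r^2}\,\partial_1\partial_3\Bigl(g\tfrac{x_1}{r}\Bigr)-\frac{x_1x_2}{r^2}\,\partial_2\partial_3\Bigl(g\tfrac{x_1}{r}\Bigr)=\frac{x_2^2}{r^3}G,
\qquad
\frac{x_1^2}{r^2}\,\partial_2\partial_3\Bigl(g\tfrac{x_2}{r}\Bigr)-\frac{x_1x_2}{r^2}\,\partial_1\partial_3\Bigl(g\tfrac{x_2}{r}\Bigr)=\frac{x_1^2}{r^3}G,
\]
and summing the two lines yields $\tfrac{\partial_zg}{r}$ exactly as a bounded-angular-coefficient combination of $R_iR_3(\rho\cos\theta)$ and $R_iR_3(\rho\sin\theta)$. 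Since $|\rho\cos\theta|,\,|\rho\sin\theta|\le|\rho|$ pointwise, the $L^p$ bound for $\tilde{\mathcal{L}}$ follows from Riesz-transform boundedness alone, with no leftover singular factor near the axis. With that replacement your argument is complete and self-contained.
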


\begin{proof} The detailed proof can be found in Proposition 3.1, 3.2 and Lemma 3.3 in \cite{HR:2010AIHP}. We omit the details here.

\end{proof}

\begin{lemma}\label{LEMET12}
Define $\O:=\f{w^\th}{r}$. For $1<p<+\infty$, there exists an absolute constant $C_{p}>0$ such that
$$
\left\|\nabla \frac{u^{r}}{r}\right\|_{L^{p}} \leq C_{p}\|\Omega\|_{L^{p}}.
$$
\end{lemma}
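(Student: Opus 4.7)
The plan is to write $u^r/r$ explicitly in terms of $\Omega$ via a Stokes-type stream function, and then deduce the bound from the $L^p$ boundedness of a second-order singular integral operator acting on axisymmetric functions.

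Since $u$ is axisymmetric and divergence free, the poloidal part $u^r e_r + u^z e_z$ admits a Stokes stream function $\Psi$ satisfying $u^r = -\partial_z \Psi/r$ and $u^z = \partial_r \Psi/r$. Setting $q := \Psi/r^2$ gives $u^r/r = -\partial_z q$. A direct computation of $\omega^\theta = \partial_z u^r - \partial_r u^z$ in terms of $q$ yields
\[
\Big(\Delta + \tfrac{2}{r}\partial_r\Big) q = -\Omega,
\]
which is nothing but the Poisson equation $\Delta_5 q = -\Omega$ when $q$ is regarded as an axisymmetric function on $\mathbb{R}^5$, since $\Delta + 2r^{-1}\partial_r = \partial_r^2 + 3r^{-1}\partial_r + \partial_z^2$ on such functions. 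Taking $\nabla$ of $u^r/r = -\partial_z q = \partial_z(\Delta + 2r^{-1}\partial_r)^{-1}\Omega$, the two nonzero components of $\nabla(u^r/r)$ take the form
\[
\partial_r\partial_z \Big(\Delta + \tfrac{2}{r}\partial_r\Big)^{-1}\Omega \quad\text{and}\quad \partial_z^2\Big(\Delta + \tfrac{2}{r}\partial_r\Big)^{-1}\Omega,
\]
i.e.\ second-order Riesz-type operators applied to $\Omega$.

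The task thereby reduces to establishing $L^p(\mathbb{R}^3)$-boundedness of these two operators on axisymmetric functions for $1 < p < \infty$. One route is a Calder\'on--Zygmund kernel analysis patterned on the proofs of the $L^p$ bounds for $\mathcal{L}$ and $\tilde{\mathcal{L}}$ in Proposition~3.1 of \cite{HR:2010AIHP}; an alternative is to express the two operators as compositions of $\mathcal{L}$, $\tilde{\mathcal{L}}$ and their adjoints via the identity in Lemma~\ref{LEMET11} and to extend the range from $[2,\infty)$ to $(1,\infty)$ by duality. The principal obstacle is precisely this last step: the classical $L^p(\mathbb{R}^5)$ bound for second-order Riesz transforms does not transfer directly to $L^p(\mathbb{R}^3)$, because the axisymmetric volume elements on $\mathbb{R}^5$ and $\mathbb{R}^3$, namely $r^3\,dr\,dz$ and $r\,dr\,dz$, differ by a factor of $r^2$; the kernel analysis must be carried out intrinsically in three dimensions.
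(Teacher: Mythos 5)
Your reduction is correct and is in fact the standard route: the paper itself does not prove this lemma but only cites Lei \cite{Lei2015-2} (equation (A.5)) and Miao--Zheng \cite{Miao-Zheng2013} (Proposition 2.5), and the argument in those references is precisely the one you outline. Your stream-function computation is right (one can also reach the same identity $(\Delta+\frac{2}{r}\partial_r)\frac{u^r}{r}=\partial_z\Omega$ slightly more directly from the Biot--Savart relation $(\Delta-r^{-2})u^r=\partial_z w^\th$ together with the commutation identity $(\Delta-r^{-2})(rf)=r(\Delta+\frac{2}{r}\partial_r)f$), and the two operators $\partial_r\partial_z(\Delta+\frac{2}{r}\partial_r)^{-1}$ and $\partial_z^2(\Delta+\frac{2}{r}\partial_r)^{-1}$ are exactly what must be controlled.

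The one piece you leave open --- the $L^p(\mathbb{R}^3)$ boundedness of these second-order operators on axisymmetric functions for all $1<p<\infty$ --- is the actual content of the cited results, and your diagnosis of the obstacle is accurate: the $r^3\,dr\,dz$ versus $r\,dr\,dz$ discrepancy means the $\mathbb{R}^5$ Riesz-transform bound does not transfer by a change of variables, so the kernel of $\nabla\partial_z(\Delta+\frac{2}{r}\partial_r)^{-1}$ must be obtained by integrating the five-dimensional Newtonian kernel over the two extra angular variables and then verified to satisfy the Calder\'on--Zygmund size and smoothness conditions intrinsically in $\mathbb{R}^3$; this is what Hmidi--Rousset (Proposition 3.1 of \cite{HR:2010AIHP}) and Lei's appendix carry out. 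Of your two suggested routes, only this first one goes through as stated: the second (building the operators out of $\mathcal{L}$, $\tilde{\mathcal{L}}$ and duality) does not obviously work, because $\mathcal{L}$ and $\tilde{\mathcal{L}}$ carry an extra factor of $r^{-1}$ next to the first-order derivative, so no composition of them and their adjoints produces $\partial_r\partial_z(\Delta+\frac{2}{r}\partial_r)^{-1}$ without further commutator manipulations, and their stated boundedness range $[2,\infty)$ would in any case have to be widened. In short: your proof is the right one, modulo quoting (or reproducing) the singular-integral lemma from the references the paper itself relies on.
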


The proof of this lemma can be founded in many literatures, such as \cite{Lei2015-2} ( equation (A.5)) and  \cite{Miao-Zheng2013} (Proposition 2.5).

\qed

Next we give a Sobolev-Hardy inequality. We omit the detailed proof since it could be found in the Lemma 2.4 of \cite{ChenFangZhang2017}.
\begin{lemma}\label{CFZ}
Set $\mathbb{R}^{n}=\mathbb{R}^{k} \times \mathbb{R}^{n-k}$ with $2 \leq k \leq n,$ and write $x=\left(x^{\prime}, z\right) \in \mathbb{R}^{k} \times \mathbb{R}^{n-k} .$ For
$1<q<n, 0 \leq \theta \leq q$ and $\theta<k,$ let $q_{*} \in\left[q, \frac{q(n-\theta)}{n-q}\right] .$ Then there exists a positive constant $C=C(\theta, q, n, k)$ such that for all $f \in C_{0}^{\infty}\left(\mathbb{R}^{n}\right),$
\[
\int_{\mathbb{R}^{n}} \frac{|f|^{q_{*}}}{\left|x^{\prime}\right|^{\theta}} d x \leq C\|f\|_{L^q}^{\frac{n-\theta}{q^{*}}-\frac{n}{q}+1}\|\nabla f\|_{L^q}^{\frac{n}{q}-\frac{n-\theta}{q *}}.
\]
In particular, we pick $n=3, k=2, q=2, q_{*} \in[2,2(3-\theta)],$ and assume $0 \leq \theta<2, r=$ $\sqrt{x_{1}^{2}+x_{2}^{2}} .$ Then there exists a positive constant $C=C\left(q_{*}, \theta\right)$ such that for all $f \in C_{0}^{\infty}\left(\mathbb{R}^{n}\right)$
\be\label{CFZEST}
\left\|\frac{f}{r^{\frac{\theta}{q *}}}\right\|_{L^{q_{*}}} \leq C\|f\|_{L^2}^{\frac{3-\theta}{q *}-\frac{1}{2}}\|\nabla f\|_{L^2}^{\frac{3}{2}-\frac{3-\theta}{q *}}.
\ee
\end{lemma}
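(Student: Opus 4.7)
The plan is to derive the weighted inequality as an interpolation in the exponent $q_*$, built from the cylindrical Hardy inequality at the lower endpoint $q_*=q$ and the cylindrical Caffarelli-Kohn-Nirenberg (CKN) inequality at the upper endpoint $q_*=q(n-\theta)/(n-q)$. At the lower endpoint $q_*=q$, the claim reduces (after the powers of $\|f\|_{L^q}$ and $\|\nabla f\|_{L^q}$ are pinned down by scaling) to the cylindrical Hardy inequality
\[
\int_{\mathbb{R}^n}\frac{|f|^q}{|x'|^\theta}\,dx\leq C\,\|f\|_{L^q}^{q-\theta}\|\nabla f\|_{L^q}^{\theta},\qquad 0\leq\theta<k,
\]
which I would prove by slicing: for each fixed $z\in\mathbb{R}^{n-k}$, apply the classical $k$-dimensional Hardy inequality on the slice $f(\cdot,z)$ after a H\"older step between $L^q(|y|^{-q}dy)$ and $L^q(dy)$, and then integrate in $z$. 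The hypothesis $\theta<k$ is precisely what keeps the $k$-dimensional Hardy constant finite, while $\theta\leq q$ is what makes the H\"older step admissible.

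At the upper endpoint the inequality becomes the scale-invariant Sobolev-Hardy bound
\[
\int_{\mathbb{R}^n}\frac{|f|^{q(n-\theta)/(n-q)}}{|x'|^\theta}\,dx\leq C\,\|\nabla f\|_{L^q}^{q(n-\theta)/(n-q)},
\]
which I would establish by a dyadic decomposition in the transverse variable: split $\mathbb{R}^n$ into slabs $A_j=\{2^j\leq|x'|<2^{j+1}\}\times\mathbb{R}^{n-k}$ via a smooth partition of unity in $|x'|$, extract the weight at the scale $2^{-j\theta}$, apply the unweighted Sobolev embedding on each slab, and sum using $\ell^q\hookrightarrow\ell^{q_*}$ once the exponents are balanced by the constraint $q_*=q(n-\theta)/(n-q)$. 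The intermediate values of $q_*$ then follow by H\"older interpolation between the two endpoint estimates, with the interpolation parameter uniquely fixed by scaling. Specializing $n=3$, $k=2$, $q=2$ and rewriting $\int|f|^{q_*}r^{-\theta}\,dx=\|f/r^{\theta/q_*}\|_{L^{q_*}}^{q_*}$ immediately yields \eqref{CFZEST}.

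The main obstacle is the endpoint CKN step: the cylindrical weight $|x'|^{-\theta}$ is not a Muckenhoupt $A_p$ weight on $\mathbb{R}^n$ for generic $\theta$, so one cannot simply invoke a standard weighted Sobolev embedding. The argument must exploit the product structure $\mathbb{R}^n=\mathbb{R}^k\times\mathbb{R}^{n-k}$ together with the sharp condition $\theta<k$ that guarantees local integrability of the weight on $k$-dimensional slices, and the dyadic-sum step requires careful bookkeeping of the scale factors to ensure convergence precisely at the balancing exponent.
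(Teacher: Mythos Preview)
The paper does not actually prove this lemma: after stating it, the authors write ``We omit the detailed proof since it could be found in the Lemma 2.4 of \cite{ChenFangZhang2017}'' and move on. So there is no in-paper argument to compare against; your sketch is already more than what the paper provides.

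Your overall architecture (endpoint Hardy at $q_*=q$, endpoint scale-invariant Sobolev--Hardy at $q_*=q(n-\theta)/(n-q)$, then H\"older interpolation in $q_*$) is the standard route and is sound. There is, however, a genuine gap in your lower-endpoint step as written. You propose to reach the weight $|x'|^{-\theta}$ by H\"older-interpolating between $L^q(|y|^{-q}\,dy)$ and $L^q(dy)$ on each $k$-dimensional slice and then invoking the classical $k$-dimensional Hardy inequality $\int_{\mathbb{R}^k}|g|^q|y|^{-q}\,dy\lesssim\int_{\mathbb{R}^k}|\nabla g|^q\,dy$. That Hardy inequality requires $q<k$, which is \emph{not} among the hypotheses; in fact the very case the paper uses, $n=3$, $k=2$, $q=2$, has $q=k$, where the $2$-dimensional Hardy inequality with weight $|y|^{-2}$ is false. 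So the argument as stated breaks exactly in the regime of interest.

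The fix is to avoid ever passing through the critical weight $|y|^{-q}$. Work directly with $|y|^{-\theta}$ using the divergence identity $\operatorname{div}_{x'}\!\bigl(x'/|x'|^{\theta}\bigr)=(k-\theta)|x'|^{-\theta}$, valid precisely because $\theta<k$: integrating by parts gives
\[
(k-\theta)\int_{\mathbb{R}^n}\frac{|f|^{q}}{|x'|^{\theta}}\,dx
= -q\int_{\mathbb{R}^n}|f|^{q-2}f\,\frac{x'\cdot\nabla_{x'}f}{|x'|^{\theta}}\,dx
\le q\!\left(\int\frac{|f|^{q}}{|x'|^{\theta}}\right)^{\!1-1/q}\!\!\left(\int\frac{|\nabla f|^{q}}{|x'|^{\theta-q}}\right)^{\!1/q},
\]
and since $\theta\le q$ the weight $|x'|^{q-\theta}$ on the gradient term is harmless (bound it by H\"older between $\|\nabla f\|_{L^q}$ and the left-hand side, or simply iterate). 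This yields the endpoint $q_*=q$ under exactly the stated hypotheses $\theta<k$, $\theta\le q$, after which your interpolation scheme goes through. A similar caution applies at the upper endpoint: in the dyadic-slab argument, the commutator terms $|\nabla\phi_j|\,|f|\sim 2^{-j}|f|$ must be reabsorbed, and it is again the strict inequality $\theta<k$ that makes the resulting geometric sum converge.
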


\qed

%\begin{corollary}\label{CORMRP}
%Assume that the Lebesgue exponents $p_{1}$ and $p_{2}$ fulfill $0 \leq \f{1}{p_{2}}-\f{1}{p_{1}}<\f{1}{d}$, and that $1<r_{2}<r_{1}<\infty$ are interrelated by the relation
%\[
%\frac{2}{r_{2}}+\frac{d}{p_{2}}=1+\frac{2}{r_{1}}+\frac{d}{p_{1}}.
%\]
%Let us define the operator $\mathcal{A}_{1}$ by
%\[
%\mathcal{A}_{1}: \quad f \mapsto \int_{0}^{t} \nabla e^{(t-s) \Delta} f(s, \cdot) d s
%\]
%Then $\mathcal{A}_{1}$ is bounded from $L^{r_{2}}\left(0,T; L^{p_{2}}(\mathbb{R}^{d})\right)$ to $L^{r_{1}}\left(0, T; %L^{p_{1}}(\mathbb{R}^{d})\right)$ for every $T \in(0,\infty]$, and
%there holds
%\be
%\left\|\mathcal{A}_{1} f\right\|_{L_{T}^{r_{1}}\left(L^{p_{1}}\right)} \leq C\|f\|_{L_{T}^{r_{2}}\left(L^{p_{2}}\right)}
%\ee
%for a suitable constant $C>0$ depending only on the space dimension $d \geq 1$ and on $p_{1}, r_{1}, p_{2}, r_{2}$.
%\end{corollary}

%\qed
 Using the Biot-Savart law and the $L^p$ boundedness of  Calderon-Zygmund singular integral operators, we have the following lemma whose detailed proof can be found for example in \cite{CL:2002MZ, Chen-Zhang2007}.
\begin{lemma}
Let $u=u^re_r+u^\th e_\th+u^z e_z$ be an axially symmetric vector field, $w=\nabla\times u=w^re_r+w^\th e_\th+w^z e_z$ and $b=u^r e_r+u^z e_z$. Then we have
\be\label{Biot1}
\|\nabla u\|_{L^p}\leq C_p\|w\|_{L^p},\q \|\nabla^2 u\|_{L^p}\leq C_p\|\na w\|_{L^p}
\ee
and
\be\label{Biot2}
\|\nabla b\|_{L^p}\leq C_p\|w^\th\|_{L^p}, \q \|\nabla^2 b\|_{L^p}\leq C_p\left(\|\na w^\th\|_{L^p}+\left\|\f{w^\th}{r}\right\|_{L^p}\right)
\ee
for all $1<p<\infty$.
\end{lemma}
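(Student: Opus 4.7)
The plan is to prove both pairs of inequalities by combining the Biot--Savart representation of a divergence-free vector field on $\mathbb{R}^3$ with the $L^p$ boundedness of classical Calder\'on--Zygmund singular integrals, using the axisymmetric structure only for the second pair \eqref{Biot2}.

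For \eqref{Biot1}, since $u$ is divergence-free with suitable decay, the vector identity $-\Delta u = \nabla \times (\nabla \times u) - \nabla(\nabla \cdot u) = \nabla \times w$ gives the representation $u = (-\Delta)^{-1} \nabla \times w$. Differentiating once, the components of $\nabla u$ are Fourier multipliers of degree zero applied to components of $w$, i.e. matrices of Calder\'on--Zygmund convolution operators, hence bounded on $L^p$ for $1 < p < \infty$. This yields $\|\nabla u\|_{L^p} \leq C_p \|w\|_{L^p}$. Differentiating once more and applying the same CZ bound to $\nabla w$ gives $\|\nabla^2 u\|_{L^p} \leq C_p \|\nabla w\|_{L^p}$.

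For \eqref{Biot2}, I would first verify that the swirl-free part $b = u^r e_r + u^z e_z$ is itself divergence-free with curl $w^\theta e_\theta$. Indeed $\nabla \cdot (u^\theta e_\theta) = 0$ by axisymmetry, so $\nabla \cdot b = \nabla \cdot u = 0$; and a direct calculation in cylindrical coordinates shows that the $e_r$ and $e_z$ components of $\nabla \times u$ depend only on $u^\theta$, leaving $\nabla \times b = w^\theta e_\theta$. Thus $b = (-\Delta)^{-1} \nabla \times (w^\theta e_\theta)$, and the same CZ argument yields $\|\nabla b\|_{L^p} \leq C_p \|w^\theta e_\theta\|_{L^p} = C_p \|w^\theta\|_{L^p}$.

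For the $\nabla^2 b$ bound, applying one more derivative gives $\|\nabla^2 b\|_{L^p} \leq C_p \|\nabla(w^\theta e_\theta)\|_{L^p}$ by the same multiplier boundedness, so the remaining task is to dominate $\|\nabla(w^\theta e_\theta)\|_{L^p}$ by the right-hand side of \eqref{Biot2}. Writing $w^\theta e_\theta = (-w^\theta x_2/r,\, w^\theta x_1/r,\, 0)$ in Cartesian coordinates and differentiating, one gets two kinds of terms: those in which $\nabla$ hits $w^\theta$, bounded pointwise by $|\nabla w^\theta|$, and those in which $\nabla$ hits the unit vector $e_\theta$, which are of the form $w^\theta \partial_j(x_k/r)$ and are pointwise bounded by a constant multiple of $|w^\theta|/r$. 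Hence $|\nabla(w^\theta e_\theta)| \lesssim |\nabla w^\theta| + |w^\theta|/r$, and taking the $L^p$ norm gives the claimed estimate. The only subtle point in the whole argument is checking that the differentiation of $e_\theta$ contributes only a $w^\theta/r$ term (and not a more singular $w^\theta/r^2$), which is immediate from the explicit Cartesian form of $e_\theta$ and matches exactly the norm appearing on the right-hand side of \eqref{Biot2}.
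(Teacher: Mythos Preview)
Your proposal is correct and follows exactly the approach the paper indicates: the paper does not give a detailed proof but simply states that the lemma follows from ``the Biot--Savart law and the $L^p$ boundedness of Calder\'on--Zygmund singular integral operators,'' citing \cite{CL:2002MZ, Chen-Zhang2007}. Your argument fleshes out precisely this outline, including the key observation for \eqref{Biot2} that $b$ is itself divergence-free with curl $w^\theta e_\theta$, and the pointwise bound $|\nabla(w^\theta e_\theta)|\lesssim |\nabla w^\theta|+|w^\theta|/r$ coming from differentiating $e_\theta$ in Cartesian coordinates.
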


\begin{lemma}[Maximal $L^r_TL^p$ regularity for the heat flow]\label{MRP}
Let us define the operator $\mathcal{A}_{2}$ by the formula
\[
\mathcal{A}_{2}: \quad f \longmapsto \int_{0}^{t} \nabla^{2} e^{(t-s) \Delta} f(s, \cdot) d s.
\]
Then $\mathcal{A}_{2} \text { is bounded from } L^{r_{2}}\left(0, T; L^{p_{2}}(\mathbb{R}^{d})\right) \text { to } L^{r_{2}}\left( 0, T; L^{p_{2}}(\mathbb{R}^{d})\right)$ for every  $T\in(0,\infty]$ and
$1<p_{2}, r_{2}<\infty$. Moreover, there holds:
\be\label{2.10PR}
\left\|\mathcal{A}_{2} f\right\|_{L_{T}^{r_{2}}\left(L^{p_{2}}\right)} \leq C\|f\|_{L_{T}^{r_{2}}\left(L^{p_{2}}\right)}.
\ee
\end{lemma}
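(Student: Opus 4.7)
The plan is to identify Lemma \ref{MRP} as the classical maximal $L^{r_2}$-$L^{p_2}$ regularity estimate for the heat equation on $\bR^d$, and to prove it via parabolic Calder\'on-Zygmund theory. First I would extend $f$ by zero outside $[0,T]$, so that
$\mathcal{A}_2f(t,x)=\int_{\bR}\int_{\bR^d}K(t-s,x-y)f(s,y)\,dy\,ds$,
where $K(\tau,x)=\na_x^2\G(\tau,x)\,\chi_{\{\tau>0\}}$ and $\G$ is the standard heat kernel on $\bR^d$. Because the target bound is to be uniform in $T\in(0,\i]$, it suffices to work on the full time line.

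In the diagonal case $r_2=p_2$, I would treat $\mathcal{A}_2$ as a Calder\'on-Zygmund operator on the space of homogeneous type $(\bR^{d+1},\r,dt\,dx)$ with parabolic metric $\r((t,x),(t',x'))=\max(|t-t'|^{1/2},|x-x'|)$. The heat kernel yields
$|K(\tau,x)|\ls(|\tau|+|x|^2)^{-(d+2)/2}$
together with matching bounds on $\na_x K$ and $\p_\tau K$, which give the H\"ormander smoothness condition for this metric. An $L^2(\bR^{d+1})$ bound follows from Plancherel, since the Fourier symbol $\xi_j\xi_k/(i\tau+|\xi|^2)$ is uniformly bounded. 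Weak-type $(1,1)$, Marcinkiewicz interpolation, and duality then yield $L^{r_2}(\bR^{d+1})$ boundedness for every $1<r_2<\i$.

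For the off-diagonal case $r_2\ne p_2$, I would view $\mathcal{A}_2$ as a $\mathcal{B}(L^{p_2}(\bR^d))$-valued convolution operator in $t$, associated with the operator-valued Fourier symbol $\tau\mapsto m(\tau)$, where $m(\tau)$ is the $x$-Fourier multiplier with symbol $\xi_j\xi_k/(i\tau+|\xi|^2)$. Since $L^{p_2}(\bR^d)$ is a UMD space for $1<p_2<\i$, and both $\{m(\tau)\}_{\tau\in\bR}$ and $\{\tau m'(\tau)\}_{\tau\in\bR}$ are $R$-bounded families on $L^{p_2}(\bR^d)$, Weis's operator-valued Mikhlin multiplier theorem delivers the $L^{r_2}(\bR;L^{p_2}(\bR^d))$ bound.

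The hard part is exactly this mixed-norm step. The scalar case is textbook Calder\'on-Zygmund in a space of homogeneous type, but accessing arbitrary $r_2\ne p_2$ requires either the $R$-bounded Mikhlin theorem above, or equivalently the Benedek-Calder\'on-Panzone theorem on vector-valued singular integrals applied to the $L^{p_2}(\bR^d)$-valued kernel; verifying $R$-boundedness of the symbol family (or the corresponding vector-valued H\"ormander condition) is the technical core. Since this statement is completely classical, in a written proof I would most likely just cite Lemari\'e-Rieusset's monograph or the Hyt\"onen-van Neerven-Veraar-Weis treatise rather than carry out all the above.
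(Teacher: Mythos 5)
Your proposal is correct, but note that the paper itself offers no proof of Lemma \ref{MRP} at all: it is stated as a classical fact (maximal $L^{r}_TL^{p}$ regularity of the heat semigroup) and implicitly left to the standard references. Your sketch is the standard route to it: the extension of $f$ by zero reduces matters to the whole time line uniformly in $T$ (legitimate here because the kernel is supported in $\{t-s>0\}$), the symbol $\xi_j\xi_k/(i\tau+|\xi|^2)$ is bounded so Plancherel gives the $L^2(\bR^{d+1})$ bound, the parabolic Calder\'on--Zygmund kernel estimates handle the diagonal case $r_2=p_2$, and the genuinely mixed-norm case $r_2\ne p_2$ requires either the Benedek--Calder\'on--Panzone vector-valued singular integral theorem or, equivalently, Weis's $R$-bounded operator-valued Mikhlin theorem on the UMD space $L^{p_2}(\bR^d)$. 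You correctly isolate the mixed-norm step as the technical core, and citing a monograph (Lemari\'e-Rieusset, or Hyt\"onen--van Neerven--Veraar--Weis) for it is exactly what the authors do in spirit by omitting the proof; no gap.
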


Finally, we recall the following logarithmic imbedding inequality which is proved in \cite{Kozono2000}.

\begin{lemma}\label{LEM2.5}
Let $1<p<\infty$ and $s>d/p$. There exists a constant $C=C_{d,p,s}$ such that the estimate
\be\label{2.28b}
\|f\|_{L^\infty(\mathbb{R}^d)}\leq C\left(1+\|f\|_{BMO}\log(e+\|f\|_{W^{s,p}(\mathbb{R}^d)})\right)
\ee
holds for all $f\in W^{s,p}(\mathbb{R}^d)$.
\end{lemma}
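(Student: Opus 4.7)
The plan is to decompose $f$ by an (inhomogeneous) Littlewood--Paley partition $f=\Delta_{-1}f+\sum_{j\geq 0}\Delta_j f$, split this sum at a threshold $N\in\mathbb{N}$ to be chosen, bound the low-frequency block $S_N f=\sum_{j<N}\Delta_j f$ via the $BMO$ norm and the high-frequency tail via Sobolev embedding, and finally optimize in $N$. The logarithm on the right-hand side will appear precisely because the $BMO$ control of a single dyadic block is uniform in $j$, while the $W^{s,p}$ estimate on each block decays geometrically under the hypothesis $s>d/p$.

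The crucial low-frequency ingredient is the uniform bound
\[
\|\Delta_j f\|_{L^\infty(\mathbb{R}^d)}\leq C\|f\|_{BMO(\mathbb{R}^d)}\qquad\text{for all }j\geq -1.
\]
I would prove this by writing $\Delta_j f(x)=\int\phi_j(x-y)\bigl(f(y)-f_{B(x,2^{-j})}\bigr)\,dy$ (using $\int\phi_j=0$ when $j\geq 0$) and estimating via a dyadic annular decomposition together with the $BMO$ seminorm, handling the $j=-1$ block separately. Summing over $-1\leq j<N$ yields $\|S_N f\|_{L^\infty}\leq C(N+2)\|f\|_{BMO}$.

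For the high-frequency tail, Bernstein's inequality and the definition of the Sobolev norm give
\[
\|\Delta_j f\|_{L^\infty}\leq C\,2^{jd/p}\|\Delta_j f\|_{L^p}\leq C\,2^{j(d/p-s)}\|f\|_{W^{s,p}},
\]
so the hypothesis $s>d/p$ makes the geometric sum converge to $C\,2^{-N(s-d/p)}\|f\|_{W^{s,p}}$. Combining the two bounds,
\[
\|f\|_{L^\infty}\leq C(N+2)\|f\|_{BMO}+C\,2^{-N(s-d/p)}\|f\|_{W^{s,p}},
\]
and choosing $N$ to be the least nonnegative integer with $2^{N(s-d/p)}\geq e+\|f\|_{W^{s,p}}$ (so that $N\lesssim\log(e+\|f\|_{W^{s,p}})$) forces the second term to be $O(1)$; absorbing it into the ``$1+$'' on the right and keeping the first term yields exactly the claimed inequality.

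The main obstacle is the uniform estimate $\|\Delta_j f\|_{L^\infty}\lesssim\|f\|_{BMO}$, which is the heart of the matter: since $BMO$ strictly contains $L^\infty$, a naive pointwise bound fails, and one must exploit both the zero-mean property of the Littlewood--Paley kernel and the controlled oscillation of $f$ on every dyadic ball---effectively a soft use of John--Nirenberg. Once this is established, the remainder (Bernstein's inequality, a geometric sum, and one round of logarithmic optimization) is standard Littlewood--Paley bookkeeping.
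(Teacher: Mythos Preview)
The paper does not actually prove this lemma: it merely recalls the statement and cites Kozono--Taniuchi \cite{Kozono2000}. Your Littlewood--Paley argument is precisely the standard proof from that reference, so in spirit you are reproducing what the paper defers to.

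One technical point deserves care. You assert $\|\Delta_{-1}f\|_{L^\infty}\leq C\|f\|_{BMO}$ as part of the bound $\|S_Nf\|_{L^\infty}\leq C(N+2)\|f\|_{BMO}$, but the low-pass kernel does \emph{not} have zero mean, so the subtraction-of-average trick you describe for $j\geq 0$ is unavailable, and in fact no such bound holds for the $BMO$ seminorm alone (it is not even well defined on constants). The clean fix is to work with the \emph{homogeneous} Littlewood--Paley decomposition $f=\sum_{j\in\mathbb{Z}}\dot\Delta_j f$ and split at $|j|=N$: the middle block $\sum_{|j|\leq N}$ is controlled by $(2N+1)\|f\|_{BMO}$ via the embedding $BMO\hookrightarrow\dot B^0_{\infty,\infty}$ (this is exactly your zero-mean argument, now valid for every $j\in\mathbb{Z}$); the high-frequency tail $j>N$ is handled by Bernstein and the $\dot W^{s,p}$ seminorm as you wrote; and the low-frequency tail $j<-N$ is handled by Bernstein and the $L^p$ norm,
\[
\sum_{j<-N}\|\dot\Delta_j f\|_{L^\infty}\leq C\sum_{j<-N}2^{jd/p}\|f\|_{L^p}\leq C\,2^{-Nd/p}\|f\|_{W^{s,p}}.
\]
Both tails now decay geometrically in $N$, and your optimization step goes through unchanged to produce the ``$1+$'' and the logarithm. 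With this adjustment the argument is complete and matches the cited source.
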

In this paper, the following corollary of Lemma \ref{LEM2.5} is more convenient for us. That is:
\begin{corollary}\label{COR2.6}
For any divergence free vector field $g:\mathbb{R}^3\,\to\,\mathbb{R}^3$ such that $g\in H^3(\mathbb{R}^3)$, the following estimate holds:
\be\label{2.29b}
\|\na g\|_{L^\i(\mathbb{R}^3)}\les\,1+\|\na\times g\|_{BMO }\log\left(e+\|g\|_{H^3(\mathbb{R}^3)}\right).
\ee
\end{corollary}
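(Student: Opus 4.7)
The plan is to apply Lemma \ref{LEM2.5} directly to $\nabla g$, and then convert the $BMO$ norm of $\nabla g$ into that of the vorticity $\nabla\times g$ by exploiting the Biot--Savart identity available for divergence free fields.

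First I would invoke Lemma \ref{LEM2.5} componentwise with $d=3$, $p=2$, and $s=2$ (so that $s>d/p=3/2$), which yields
\[
\|\nabla g\|_{L^\infty(\mathbb{R}^3)}\les 1+\|\nabla g\|_{BMO}\log\bigl(e+\|\nabla g\|_{H^2(\mathbb{R}^3)}\bigr).
\]
Since trivially $\|\nabla g\|_{H^2(\mathbb{R}^3)}\leq\|g\|_{H^3(\mathbb{R}^3)}$, the logarithmic factor is already of the desired form, so what remains is the estimate
\[
\|\nabla g\|_{BMO}\les\|\nabla\times g\|_{BMO}.
\]

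For this key step I would use $\nabla\cdot g=0$ together with the vector identity $-\Delta g=\nabla\times(\nabla\times g)$, which gives $g=(-\Delta)^{-1}\nabla\times w$ with $w:=\nabla\times g$. Differentiating once more, each entry $\partial_i g_j$ becomes a fixed linear combination of compositions $R_iR_k$ of Riesz transforms applied to components of $w$. Since each such $R_iR_k$ is a convolution-type Calder\'on--Zygmund operator, the classical result that such operators act boundedly on $BMO$ gives $\|\nabla g\|_{BMO}\les\|w\|_{BMO}=\|\nabla\times g\|_{BMO}$. Plugging this back into the Kozono--Taniuchi bound produces \eqref{2.29b}.

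The main (though rather mild) technical point is justifying that the singular integrals $R_iR_k$ are bounded on $BMO(\mathbb{R}^3)$ rather than in the much better known $L^p$ case; this is standard Calder\'on--Zygmund theory on $BMO$ (see e.g.\ Stein's book) and not a new ingredient. Once that fact is accepted, the remaining argument is essentially two lines combining the logarithmic Sobolev inequality with the divergence-free Biot--Savart representation.
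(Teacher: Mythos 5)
Your argument is correct and is essentially the same as the paper's own proof: the identity $\xi\otimes\hat g=-\frac{\xi}{|\xi|}\otimes\bigl(\frac{\xi}{|\xi|}\times(\xi\times\hat g)\bigr)$ used there (valid since $\xi\cdot\hat g=0$) is exactly your Fourier-side version of $-\Delta g=\nabla\times(\nabla\times g)$, expressing $\nabla g$ as iterated Riesz transforms of $\nabla\times g$, after which both proofs conclude by the $BMO$-boundedness of the Riesz operators together with Lemma \ref{LEM2.5}.
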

\begin{proof} Using Fourier transform and noting that
\[
\xi\otimes\hat{g}=-\f{\xi}{|\xi|}\otimes\left(\f{\xi}{|\xi|}\times(\xi\times\hat{g})\right)
\]
provided $\xi\cdot\hat{g}\equiv0$, \eqref{2.29b} is proven by combining the estimate \eqref{2.28b} and the fact that the Riesz operator is bounded in the BMO space.

\end{proof}
\section{Proof of Theorem \ref{th0}}
In this section, we focus on the proof of Theorem \ref{th0}. Denoting $\Gamma:=ru^\th$ and $H:=\f{h^\th}{r}$, by \eqref{MHD-BOUS}$_2$ and \eqref{MHD-BOUS}$_4$, one derives that
\[
\p_t\Gamma+(u^r\p_r+u^z\p_z)\Gamma=\left(\Delta-\frac{2}{r}\p_r\right)\Gamma.
\]
\[
\p_tH+(u^r\p_r+u^z\p_z)H=0.
\]

At the beginning, the following Lemma states fundamental estimates of the system \eqref{MHD-BOUS}:
\begin{lemma}[Fundamental Energy Estimates]\label{PROP2.1}
Let $(u,h,\rho)$ be a smooth solution of \eqref{MHD-BOUS}, the we have

(i) for $p\in[1,\infty]$ and $t\in\mathbb{R}_+$,
\be\label{4.2}
\begin{split}
\|\Gamma(t,\cdot)\|_{L^p}&\leq\|\Gamma_0\|_{L^p};\\
\|H(t,\cdot)\|_{L^p}&\leq\|H_0\|_{L^p};\\
\|\rho(t,\cdot)\|_{L^p}&\leq\|\rho_0\|_{L^p}.
\end{split}
\ee

(ii) for $u_0,h_0,\rho_0\in L^2$ and $t\in\mathbb{R}_+$,
\be\label{4.3}
\|(u,h)(t,\cdot)\|^2_{L^2}+\int_0^t\|\nabla u(s,\cdot)\|_{L^2}^2ds\leq C_0(1+t)^2,
\ee
where $C_0$ depends only on $\|(u_0,h_0,\rho_0)\|_{L^2}$.
\end{lemma}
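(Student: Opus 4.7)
For part (i), my strategy is to exploit the pure transport nature of $H$ and $\rho$ and the transport--diffusion structure of $\Gamma$. For $H$ and $\rho$, multiplying the equations $\partial_tH+(u^r\partial_r+u^z\partial_z)H=0$ and $\partial_t\rho+(u^r\partial_r+u^z\partial_z)\rho=0$ by $|H|^{p-2}H$ and $|\rho|^{p-2}\rho$ respectively and integrating over $\mathbb{R}^3$, the convective terms vanish after integration by parts thanks to $\nabla\cdot u=0$ (written in cylindrical form, $\partial_r u^r+u^r/r+\partial_z u^z=0$), giving exact $L^p$ conservation for all $p\in[1,\infty)$; the $p=\infty$ bounds follow by passing $p\to\infty$.

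For the $\Gamma$ equation one gets an extra contribution from the operator $\Delta-\frac{2}{r}\partial_r$. Testing against $|\Gamma|^{p-2}\Gamma$ and integrating, the Laplacian yields the non-positive dissipation $-(p-1)\int|\nabla\Gamma|^2|\Gamma|^{p-2}\,dx$, while the remaining term, after writing $dx=r\,dr\,d\theta\,dz$, becomes
\[
-\frac{2}{p}\int\frac{1}{r}\partial_r|\Gamma|^p\,dx=-\frac{4\pi}{p}\int_{\mathbb{R}}\int_0^\infty \partial_r|\Gamma|^p\,dr\,dz=\frac{4\pi}{p}\int_{\mathbb{R}}|\Gamma(0,z)|^p\,dz.
\]
The key point, which is the one subtlety of this step, is that $\Gamma=ru^\theta$ vanishes identically on the symmetry axis (this is forced by the smoothness and axisymmetry of $u$), so the boundary contribution is zero and we conclude $\tfrac{d}{dt}\|\Gamma\|_{L^p}^p\le 0$. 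Sending $p\to\infty$ yields the $L^\infty$ bound as well.

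For part (ii), I will carry out the standard coupled $L^2$ energy estimate on the original system \eqref{MB}. Testing \eqref{MB}$_1$ against $u$ and \eqref{MB}$_2$ against $h$, the pressure and convective terms drop by $\nabla\cdot u=\nabla\cdot h=0$, and an integration by parts shows the crucial MHD cancellation $\int(h\cdot\nabla h)\cdot u\,dx+\int(-h\cdot\nabla u)\cdot h\,dx=0$. What survives is
\[
\frac{1}{2}\frac{d}{dt}\left(\|u\|_{L^2}^2+\|h\|_{L^2}^2\right)+\|\nabla u\|_{L^2}^2=\int_{\mathbb{R}^3}\rho\,u^z\,dx\le\|\rho_0\|_{L^2}\|u\|_{L^2},
\]
where I have used the uniform bound $\|\rho(t)\|_{L^2}\le\|\rho_0\|_{L^2}$ from part (i). Setting $E(t):=\|u(t)\|_{L^2}^2+\|h(t)\|_{L^2}^2$, the inequality $E'(t)\le 2\|\rho_0\|_{L^2}\sqrt{E(t)}$ integrates to $\sqrt{E(t)}\le\sqrt{E(0)}+\|\rho_0\|_{L^2}t$, which gives the desired $(1+t)^2$ bound on $E(t)$; re-inserting into the differential inequality and integrating in time yields the claimed control of $\int_0^t\|\nabla u\|_{L^2}^2\,ds$. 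No step here should cause real difficulty; the only items worth flagging are the vanishing of the axis contribution for $\Gamma$ and the MHD energy cancellation, both of which are routine but essential.
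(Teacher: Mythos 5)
Your argument is correct and is exactly the standard proof that the paper omits (the paper merely invokes the classical transport/heat-equation $L^p$ estimates and cites Proposition 2.1 of Pan's paper); both of the points you flag, the vanishing of $\Gamma=ru^\theta$ on the axis which kills the boundary term from $-\frac{2}{r}\partial_r$, and the growth bound obtained from $E'(t)\le 2\|\rho_0\|_{L^2}\sqrt{E(t)}$, are the intended content. The one blemish is a sign slip in your displayed cancellation, which should read $\int_{\mathbb{R}^3}(h\cdot\nabla h)\cdot u\,dx+\int_{\mathbb{R}^3}(h\cdot\nabla u)\cdot h\,dx=0$ (the term $-h\cdot\nabla u$ sits on the left of the $h$-equation, so it contributes $+\int (h\cdot\nabla u)\cdot h$ once moved to the right); the energy identity you then state, with only $\int\rho\,u^z\,dx$ surviving, is nevertheless correct.
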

\pf
The estimate in \eqref{4.2} is classical for the heat equation when $p<\infty$ and follows from the maximum principle when $p=\infty$. Meanwhile, \eqref{4.3} follows from the standard $L^2$ estimate of the system \eqref{MB}, together with the result in \eqref{4.2}. See also \cite[Proposition 2.1]{Pan2019}. We omit the details here.

\qed
\subsection{}\textbf{$\boldsymbol{L^\infty_TL^2\cap L^2_TH^1}$ estimate of a reformulated system}\\[2mm]
First we see the vorticity $w$ of the axially symmetric velocity $u$ is defined by
\[
w=\nabla\times u=w^r(t,r,z)e_r+w^\th(t,r,z) e_\th+w^z(t,r,z)e_z,
\]
where
\[
w^r=-\p_zu^\th,\quad w^\th=\p_zu^r-\p_ru^z,\quad w^z=\p_ru^\th+\frac{u^\th}{r}.
\]
By the first three equations of \eqref{MHD-BOUS}, $(w^r,w^\th,w^z)$ satisfies
\be\label{EQVOR}
\lt\{
\begin{aligned}
&\p_t w^r+(u^r\p_r+u^z\p_z)w^r=\left(\Dl-\f{1}{r^2}\right)w^r+(w^r\p_r+w^z\p_z)u^r,\\[4mm]
&\p_t w^\th+(u^r\p_r+u^z\p_z)w^\th=\left(\Dl-\f{1}{r^2}\right)w^\th+\f{u^r}{r}w^\th+\f{1}{r}\p_z(u^\th)^2-\f{1}{r}\p_z(h^\th)^2-\p_r\rho,\\[4mm]
&\p_t w^z+(u^r\p_r+u^z\p_z)w^z=\Dl w^z+(w^r\p_r+w^z\p_z)u^z.
\end{aligned}
\rt.
\ee
 Applying $\mathcal{L}=\left(\Delta+\frac{2}{r} \partial_{r}\right)^{-1} \frac{\partial_{r}}{r}$ to the equation of $\rho$, one derives
\be\label{Lrho}
\p_t\mathcal{L}\rho+u\cdot\nabla\mathcal{L}\rho=-[\mathcal{L},u\cdot\nabla]\rho.
\ee
Meanwhile, $\eqref{EQVOR}_{2}$ indicates $\O:=\f{w^\th}{r}$ satisfies
\be\label{Omega}
\p_t\O+u\cdot\nabla\O=\left(\Delta+\f{2}{r}\p_r\right)\O-\p_zH^2-\f{\p_r\rho}{r}-\f{2u^\th w^r}{r^2}.
\ee
Now we denote $L:=\O-\mathcal{L}\rho$. Subtracting \eqref{Lrho} from \eqref{Omega} and noting the axially symmetric condition, we have
\be\label{EQL}
\p_t L+(u^r\p_r+u^z\p_z)L=\left(\Delta+\f{2}{r}\p_r\right)L-\p_zH^2+[\mathcal{L},u\cdot\nabla]\rho-2\f{u^\th w^r}{r^2}.
\ee
On the other hand, by denoting $J:=\f{w^r}{r}$, we can get the following equation from $\eqref{EQVOR}_{1}$:
\be\label{EQJ}
\p_t J+(u^r\p_r+u^z\p_z)J=\left(\Delta+\f{2}{r}\p_r\right)J+(w^r\p_r+w^z\p_z)\f{u^r}{r}.
\ee
Therefore, we have the following reformulated system by combining \eqref{EQL} and \eqref{EQJ}:
\be\label{REFORM}
\lt\{
\begin{aligned}
&\p_t L+(u^r\p_r+u^z\p_z)L=\left(\Delta+\f{2}{r}\p_r\right)L-\p_zH^2+[\mathcal{L},u\cdot\nabla]\rho-2\f{u^\th}{r}J,\\
& \p_t J+(u^r\p_r+u^z\p_z)J=\left(\Delta+\f{2}{r}\p_r\right)J+(w^r\p_r+w^z\p_z)\f{u^r}{r}.
\end{aligned}
\rt.
\ee

 Now we are ready for an a prior $L^\infty_TL^2\cap L^2_TH^1$ estimate for the above reformulated system. We have the following Lemma.
%For a clue, we present here the following observation on the vortex stretching term on the cylindrical coordinates before the proof.
%\begin{equation}\begin{aligned}
%\int_{\mathbb{R}^{3}}(w \cdot \nabla u) \cdot w d x=& \int_{\mathbb{R}^{3}} w^{r} \partial_{r} u^{r} w^{r}+w^{r} \partial_{r} u^{\theta} w^{\theta}+w^{r} \partial_{r} u^{3} w^{3}+\frac{1}{r} w^{\theta} u^{r} w^{\theta}-\frac{1}{r} w^{\theta} u^{\theta} w^{r} \\
%&+w^{3} \partial_{3} u^{r} w^{r}+w^{3} \partial_{3} u^{\theta} w^{\theta}+w^{3} \partial_{3} u^{3} w^{3} d x.\\
%\end{aligned}\end{equation}
%Since each term on the right hand side above has at least one $w^\th$, or $\bar{\nabla}b$ (which can be generated from $w^\th e_\th$ by acting a singular integral operator on it.), we only need the Prodi-Serrin-type condition for the $w^\th$ component.
\begin{lemma}\label{3.4VOL}
Under the same conditions as Theorem \ref{th0}, the following a priori estimate of $(L,J)$ holds:
\be\label{3.4VO}
\sup_{0\leq t\leq T_*}\|(L,J)(t,\cdot)\|_{L^2}^2+\int_0^{T_*}\|\nabla (L,J)(t,\cdot)\|_{L^2}^2dt<\infty.
\ee
\end{lemma}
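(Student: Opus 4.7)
The plan is to carry out a standard $L^2$ energy estimate on each equation of the reformulated system \eqref{REFORM}: test $\eqref{REFORM}_1$ by $L$ and $\eqref{REFORM}_2$ by $J$ in $L^2(\mathbb{R}^3)$ and integrate by parts. The transport terms $(u^r\p_r+u^z\p_z)$ integrate to zero thanks to $\nabla\cdot u=0$ in the axisymmetric setting. For the dissipative operator $\Delta+\tfrac{2}{r}\p_r$, a direct computation in cylindrical coordinates gives
\[
\int_{\mathbb{R}^3} f\left(\Delta+\tfrac{2}{r}\p_r\right)f\,dx = -\|\nabla f\|_{L^2}^2 - 2\pi\int_{\mathbb{R}}f(0,z)^2\,dz,
\]
and the boundary-like term has favorable sign and can be discarded. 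Altogether the skeleton inequality reads
\[
\tfrac{1}{2}\tfrac{d}{dt}\|(L,J)\|_{L^2}^2 + \|\nabla(L,J)\|_{L^2}^2 \leq |\mathcal{I}_1|+|\mathcal{I}_2|+|\mathcal{I}_3|+|\mathcal{I}_4|,
\]
where $\mathcal{I}_1,\mathcal{I}_2,\mathcal{I}_3,\mathcal{I}_4$ arise from the sources $-\p_zH^2$, $[\mathcal{L},u\cdot\nabla]\rho$, $-2(u^\theta/r)J$, and the vortex-stretching $(w^r\p_r+w^z\p_z)(u^r/r)$, respectively.

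I would then dispatch the three ``benign'' terms using the preliminaries. For $\mathcal{I}_1$, integrating by parts in $z$, applying H\"older, and using $\|H\|_{L^4}\leq\|H_0\|_{L^4}$ from Lemma \ref{PROP2.1}, I would obtain $|\mathcal{I}_1|\lesssim\|H_0\|_{L^4}^2\|\nabla L\|_{L^2}$, which is absorbed via Young. For $\mathcal{I}_2$, I would invoke a commutator estimate of Hmidi--Rousset type built on the $L^p$-boundedness of $\mathcal{L}$ (Lemma \ref{LEMET11}), producing a bound of the form $C\|\nabla u\|_{L^2}\|\rho_0\|_{L^\infty}\|L\|_{L^2}$ whose time-integrability follows from Lemma \ref{PROP2.1}. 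For $\mathcal{I}_4$, the key input is Lemma \ref{LEMET12}: since $\Omega=L+\mathcal{L}\rho$, one has $\|\nabla(u^r/r)\|_{L^p}\lesssim\|L\|_{L^p}+\|\rho_0\|_{L^p}$; combining this with the Biot--Savart bound \eqref{Biot1} for $w$, Gagliardo--Nirenberg interpolation (Lemma \ref{LEMGN}), and the divergence-freeness of the poloidal vorticity $(w^r,0,w^z)$ (to integrate by parts when useful), one again absorbs the highest-order piece through Young.

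The hard part is $\mathcal{I}_3=\int(u^\theta/r)JL\,dx$, which is the only place where Condition \ref{COND1} or \ref{COND2} is used. My plan is to rewrite $u^\theta/r=(u^\theta/r^s)\,r^{s-1}$, distribute the weight symmetrically, and apply H\"older:
\[
|\mathcal{I}_3|\lesssim\Big\|\tfrac{u^\theta}{r^s}\Big\|_{L^p}\Big\|\tfrac{J}{r^{(1-s)/2}}\Big\|_{L^{p_1}}\Big\|\tfrac{L}{r^{(1-s)/2}}\Big\|_{L^{p_2}}.
\]
The weighted norms are then controlled by the Sobolev--Hardy inequality \eqref{CFZEST} of Lemma \ref{CFZ}, which yields factors of the form $\|J\|_{L^2}^{a_1}\|\nabla J\|_{L^2}^{1-a_1}$ and $\|L\|_{L^2}^{a_2}\|\nabla L\|_{L^2}^{1-a_2}$. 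Choosing the exponents $p_1,p_2$ so that the aggregated power of $\|\nabla(L,J)\|_{L^2}$ matches the scaling $3/p+2/q=1+s$ of Condition \ref{COND1} allows Young's inequality to absorb the dissipation to the left, leaving a time factor $\|u^\theta/r^s\|_{L^p}^q$ that is integrable by hypothesis. Condition \ref{COND2} (borderline scaling) is even simpler: the smallness of $\ve_0$ directly absorbs $\|\nabla(L,J)\|_{L^2}^2$ into the left-hand side. A Gr\"onwall argument then delivers \eqref{3.4VO}. The most delicate points are the rigorous justification of the Hmidi--Rousset commutator bound in $\mathcal{I}_2$ and the uniform case analysis over $s\geq 0$ in $\mathcal{I}_3$, both of which require combining several of the preliminary lemmas.
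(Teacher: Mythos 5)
Your overall skeleton (test $\eqref{REFORM}_1$ by $L$ and $\eqref{REFORM}_2$ by $J$, discard the sign-favorable boundary term of $\Delta+\tfrac{2}{r}\p_r$, treat the coupling term $\int\tfrac{u^\th}{r}JL\,dx$ with the Sobolev--Hardy inequality and the Prodi--Serrin hypothesis) matches the paper, and your handling of $\mathcal{I}_1$, $\mathcal{I}_2$, $\mathcal{I}_3$ is essentially the paper's. But there is one genuine gap: you classify the vortex-stretching term $\mathcal{I}_4=\int_{\mathbb{R}^3}J\,(w^r\p_r+w^z\p_z)\tfrac{u^r}{r}\,dx$ as ``benign'' and claim it can be absorbed using only Lemma \ref{LEMET12}, Biot--Savart and Gagliardo--Nirenberg. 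This does not close. Any H\"older/Young chain of the type you sketch, e.g.
\[
|\mathcal{I}_4|\lesssim \|(w^r,w^z)\|_{L^2}\,\|J\|_{L^2}^{1/2}\|\nabla J\|_{L^2}^{1/2}\,\|\nabla \tfrac{u^r}{r}\|_{L^6}
\lesssim \|\nabla u\|_{L^2}\,\|J\|_{L^2}^{1/2}\|\nabla J\|_{L^2}^{1/2}\left(\|\nabla L\|_{L^2}+C\right),
\]
leaves, after absorbing $\|\nabla J\|_{L^2}^2$ and $\|\nabla L\|_{L^2}^2$, a Gronwall coefficient of order $\|\nabla u\|_{L^2}^{4}$, which is \emph{not} time-integrable (only $\|\nabla u\|_{L^2}^{2}\in L^1_t$ is known from the energy estimate). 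Integrating by parts using the divergence-freeness of $(w^r,0,w^z)$ does not help either: it either reproduces the same difficulty or generates $\nabla^2\tfrac{u^r}{r}$, which the paper explicitly warns cannot be controlled here because $\|\nabla^2\tfrac{u^r}{r}\|_{L^2}\lesssim\|\p_z\O\|_{L^2}$ and $\O=L+\mathcal{L}\rho$ contains the non-smoothed term $\mathcal{L}\rho$.

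The missing idea is that $\mathcal{I}_4$ is the \emph{second} place where the one-component hypothesis must be used. Since $(w^r,0,w^z)=\nabla\times(u^\th e_\th)$, one integrates by parts to put the weight on $u^\th$ itself rather than on its derivatives:
\[
\mathcal{I}_4=\int_{\mathbb{R}^3}u^\th e_\th\cdot\Big(\nabla J\times\nabla \tfrac{u^r}{r}\Big)dx
\leq \f12\int_{\mathbb{R}^3}|u^\th|^2\Big|\nabla\tfrac{u^r}{r}\Big|^2dx+\f12\|\nabla J\|_{L^2}^2,
\]
and then splits $|u^\th|^2=|ru^\th|^{\f{2s}{1+s}}\big|\tfrac{u^\th}{r^s}\big|^{\f{2}{1+s}}$ using the maximum principle bound on $\Gamma=ru^\th$, so that H\"older in $L^p$ on $u^\th/r^s$, Lemma \ref{LEMET12} ($\|\nabla\tfrac{u^r}{r}\|_{L^q}\lesssim\|\O\|_{L^q}$) and $\O=L+\mathcal{L}\rho$ produce a term $\big\|\tfrac{u^\th}{r^s}\big\|_{L^p}^{\f{2p}{(1+s)p-3}}\|L\|_{L^2}^2$ that is Gronwall-admissible precisely under Condition \ref{COND1} (or absorbed by smallness under Condition \ref{COND2}). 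A secondary, smaller issue: your symmetric weight distribution $r^{(1-s)/2}$ in $\mathcal{I}_3$ only makes sense for $0\le s\le 1$, since Lemma \ref{CFZ} requires the weight exponent $\theta$ to satisfy $0\le\theta<2$; for $s>1$ you need the same $\Gamma$-splitting trick as above rather than the Sobolev--Hardy inequality.
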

\pf Performing the $L^2$ inner product of $\eqref{REFORM}_1$, using integration by parts and divergence-free condition, one finds
\begin{equation}\label{3.55555}
\begin{aligned}
& \frac{1}{2} \frac{d}{d t}\|L(t,\cdot)\|_{L^{2}}^{2}+\|\nabla L(t,\cdot)\|_{L^{2}}^{2} \\
\leq & \int_{\mathbb{R}^{3}} \mathcal{L}(u \cdot \nabla \rho) L d x-\int_{\mathbb{R}^{3}} u \cdot \nabla(\mathcal{L} \rho) L d x-\int_{\mathbb{R}^{3}} \partial_{z} H^{2} L d x-2\int_{\mathbb{R}^{3}} \f{u^\th}{r}JLd x \\
= & \int_{\mathbb{R}^{3}} \mathcal{L}(u \cdot \nabla \rho) L d x+\int_{\mathbb{R}^{3}}(\mathcal{L} \rho) u \cdot \nabla L d x+\int_{\mathbb{R}^{3}} H^{2} \partial_{z} L d x-2\int_{\mathbb{R}^{3}} \f{u^\th}{r}JLd x \\
:=&I_{1}+I_{2}+I_{3}+I_4.
\end{aligned}\end{equation}

Using the method in the proof of Proposition 2.2 of \cite{Pan2019}, the first 3 terms above can be estimated by
\be\label{3.66666}
\begin{split}
\sum_{j=1}^3I_j\leq& C_{h_0,\rho_0}\left(1+\|\nabla u(t,\cdot)\|_{L^{2}}^{2}\right)+C(1+t)^{2} \\
&+ C_{\rho_0}\left(1+\|L(t,\cdot)\|_{L^{2}}^{2}\right)+\f{1}{4}\|\nabla L(t,\cdot)\|_{L^2}^2.
\end{split}
\ee
Meanwhile, using the Cauchy-Schwartz inequality, $I_4$ can be estimated by
\be\label{3.88888}
\begin{split}
I_4&\leq\f{1}{2}\int_{\mathbb{R}^3}\f{|u^\th|}{r}|L|^2dx+\f{1}{2}\int_{\mathbb{R}^3}\f{|u^\th|}{r}|J|^2dx\\
&:=I_{41}+I_{42}.
\end{split}
\ee
For any $p\in[\f{3}{1+s},\infty]$, we estimate $I_{41}$ and $I_{42}$ in the following 2 cases:\\[2mm]

\leftline{\textbf{Case I: $\boldsymbol{ 0\leq s\leq1}$}\\[1mm]}
We use H\"older inequality to derive
\be\label{EEE1}
I_{41}=\int_{\mathbb{R}^3}\f{u^\th}{r^s}\f{|L|^2}{r^{1-s}}dx\leq\left\|\f{u^\th}{r^s}(t,\cdot)\right\|_{L^p}\left(\int_{\mathbb{R}^3}\left|\f{L^{2p'}}{r^{(1-s)p'}}\right|dx\right)^{1/p'},
\ee
where $p'=\f{p}{p-1}$ is the conjugate number of $p$. By choosing $\theta=(1-s)p^\prime,\ q_\ast=2p^\prime$ in \eqref{CFZEST} of Lemma \ref{CFZ}, it follows that
\be\label{EEE2}
\left(\int_{\mathbb{R}^3}\left|\f{L^{2p'}}{r^{(1-s)p'}}\right|dx\right)^{1/p'}\leq C_{s,p}\|L(t,\cdot)\|_{L^2}^{1+s-\f{3}{p}}\|\nabla L(t,\cdot)\|_{L^2}^{1-s+\f{3}{p}}.
\ee
Substituting \eqref{EEE2} in \eqref{EEE1} and using Young inequality, it follows that
\be\label{I411}
I_{41}\leq \lt\{
\begin{aligned}
& C_{s,p}\left\|\f{u^\th}{r^s}(t,\cdot)\right\|_{L^p}^{\f{2p}{(1+s)p-3}}\|L(t,\cdot)\|_{L^2}^2+\f{1}{4}\|\nabla L(t,\cdot)\|_{L^2}^2,\quad\text{for } p>\f{3}{1+s};\\[3mm]
&C_{s}\left\|\f{u^\th}{r^s}(t,\cdot)\right\|_{L^p}\|\nabla L(t,\cdot)\|_{L^2}^2,\quad\text{for }p=\f{3}{1+s} .
\end{aligned}
\rt.
\ee
Similarly, $I_{42}$ satisfies
\be\label{I421}
I_{42}\leq\lt\{
\begin{aligned}
& C_{s,p}\left\|\f{u^\th}{r^s}(t,\cdot)\right\|_{L^p}^{\f{2p}{(1+s)p-3}}\|J(t,\cdot)\|_{L^2}^2+\f{1}{4}\|\nabla J(t,\cdot)\|_{L^2}^2,\quad\text{for } p>\f{3}{1+s};\\[3mm]
& C_{s}\left\|\f{u^\th}{r^s}(t,\cdot)\right\|_{L^p}\|\nabla J(t,\cdot)\|_{L^2}^2,\quad\text{for }p=\f{3}{1+s} .
\end{aligned}
\rt.
\ee\\[1mm]

\begin{remark}
Actually the  above estimate in Case I is also feasible for $-1<s<0$. However we do not pursue it because the following $L^\infty_TL^2\cap L^2_TH^1$ estimate of $J$ fails in this situation.
\end{remark}

\qed

\leftline{\textbf{Case II: $\boldsymbol{s>1}$}\\[1mm]}
Using H\"older inequality and \eqref{4.2} in Lemma \ref{PROP2.1}, one finds
\be\label{EEE3}
I_{41}=\int_{\mathbb{R}^3}|ru^\th|^{\f{s-1}{s+1}}\left|\f{u^\th}{r^s}\right|^{\f{2}{1+s}}|L|^2dx\leq\|\Gamma_0\|_{L^\infty}^{\f{s-1}{s+1}}
\left\|\f{u^\th}{r^s}(t,\cdot)\right\|_{L^p}^{\f{2}{1+s}}\|L(t,\cdot)\|_{L^{\f{2p(1+s)}{p(1+s)-2}}}^2.
\ee
Noting that $\f{2p(1+s)}{p(1+s)-2}\in[2,6]$ when $p\geq\f{3}{1+s}$ and applying Lemma \ref{LEMGN}, one has
\be\label{EEE4}
\|L(t,\cdot)\|_{L^{\f{2p(1+s)}{p(1+s)-2}}}\leq C_{s,p}\|L(t,\cdot)\|_{L^2}^{1-\f{3}{p(1+s)}}\|\nabla L(t,\cdot)\|_{L^2}^{\f{3}{p(1+s)}}.
\ee
Thus by inserting \eqref{EEE4} into \eqref{EEE3} and using H\"{o}lder's inequality, the estimate \eqref{I411} is still valid for $s>1$ with the constant $C$ depending on  $s$, $p$ and $\|\G_0\|_{L^\i}$. The proof of \eqref{I421} when $s>1$ is similar. This finishes the estimate of $I_4$ in \eqref{3.55555}. Plugging \eqref{3.66666}, \eqref{I411} and \eqref{I421} into \eqref{3.55555}, we have the following estimate of $L$ when $p>\f{3}{1+s}$:
\be\label{ESTL1}
\begin{split}
\f{d}{dt}\|L(t,\cdot)\|_{L^2}^2+\|\nabla L(t,\cdot)\|_{L^2}^2\leq &C_{ h_0,\rho_0}\left(\|\nabla u(t,\cdot)\|_{L^2}^2+(1+t)^2+\|L(t,\cdot)\|_{L^2}^2\right)\\
&+C_{s,p,\Gamma_0}\left\|\f{u^\th}{r^s}(t,\cdot)\right\|_{L^p}^{\f{2p}{(1+s)p-3}}\left(\|L(t,\cdot)\|_{L^2}^2+\|J(t,\cdot)\|_{L^2}^2\right)\\
&+\f{1}{4}\|\nabla L(t,\cdot)\|_{L^2}^2+\f{1}{4}\|\nabla J(t,\cdot)\|_{L^2}^2,
\end{split}
\ee
and the following estimate when $p=\f{3}{1+s}$:
\be\label{ESTL2}
\begin{split}
\f{d}{dt}\|L(t,\cdot)\|_{L^2}^2+\|\nabla L(t,\cdot)\|_{L^2}^2\leq &C_{h_0,\rho_0}\left(\|\nabla u(t,\cdot)\|_{L^2}^2+(1+t)^2+\|L(t,\cdot)\|_{L^2}^2\right)\\
&+C_{s,\Gamma_0}\left\|\f{u^\th}{r^s}(t,\cdot)\right\|_{L^p}\left(\|\nabla L(t,\cdot)\|_{L^2}^2+\|\nabla J(t,\cdot)\|_{L^2}^2\right).
\end{split}
\ee
Next we work on the equation of $J$ in \eqref{REFORM}. Taking $L^2$ inner product of $\eqref{REFORM}_2$, we arrive
\[
\begin{split}
\f{1}{2}\f{d}{dt}\|J(t,\cdot)\|_{L^2}^2+\|\nabla J(t,\cdot)\|_{L^2}^2&=\int_{\mathbb{R}^3}\left(\nabla\times(u^\th e_\th)\right)\cdot\left(J\nabla\f{u^r}{r}\right)dx\\
&=\int_{\mathbb{R}^3}u^\th e_\th\cdot\left(\nabla J\times\nabla\f{u^r}{r}\right)dx\\
&=\int_{\mathbb{R}^3}u^\th\left(\p_r\f{u^r}{r}\p_zJ-\p_z\f{u^r}{r}\p_rJ\right)dx\\
&\leq \f{1}{2}\int_{\mathbb{R}^3}|u^\th|^2\left|\nabla\f{u^r}{r}\right|^2dx+\f{1}{2}\|\nabla J(t,\cdot)\|_{L^2}^2,
\end{split}
\]
which follows that
\be\label{3.8111}
\begin{split}
\f{d}{dt}\|J(t,\cdot)\|_{L^2}^2+\|\nabla J(t,\cdot)\|_{L^2}^2&\lesssim\int_{\mathbb{R}^3}|u^\th|^2\left|\nabla\f{u^r}{r}\right|^2dx.\\
%&\les\|(u^\th)^2\|_{L^{p/2}}\left\|\nabla\f{u^r}{r}\right\|_{L^2}^\al\left\|\nabla\f{u^r}{r}\right\|_{L^6}^{2-\al}
\end{split}
\ee

Different from \eqref{3.88888}, here we should be very careful to avoid the appearance of second-order gradients of $\f{u^r}{r}$. Even though the following estimate
\[
\left\|\nabla^2\f{u^r}{r}\right\|_{L^2}\leq C\|\p_z\O\|_{L^2}
\]
holds (see \cite{Lei2015-2}, equation (A.6)), we still have no idea to bound $\|\p_z\O\|_{L^2}$ due to the appearance of $\nabla\mathcal{L}\rho$ at the moment. Therefore $\nabla^2\f{u^r}{r}$ term cannot be eliminated by $\|\nabla L(t,\cdot)\|_{L^2}^2$ on the left hand side of \eqref{3.55555}. This is, in the authors' opinion, a key difference from the case of Navier-Stokes and MHD systems in which $\rho\equiv 0$.

 Nevertheless, noting that $\Gamma=ru^\th$ is uniformly bounded according to Lemma \ref{PROP2.1}, for a fixed $s\in  [0,\infty)$, \eqref{3.8111} indicates that
\[
\begin{split}
\f{d}{dt}\|J(t,\cdot)\|_{L^2}^2+\|\nabla J(t,\cdot)\|_{L^2}^2&\leq C\|\Gamma_0\|_{L^\infty}^{\f{2s}{1+s}}\int_{\mathbb{R}^3}\left|\f{u^\th}{r^s}\right|^{\f{2}{s+1}}\left|\nabla\f{u^r}{r}\right|^2dx.
\end{split}
\]
For any $p\geq\f{3}{1+s}$, using H\"older inequality, it follows that
\[
\f{d}{dt}\|J(t,\cdot)\|_{L^2}^2+\|\nabla J(t,\cdot)\|_{L^2}^2\leq C\|\Gamma_0\|_{L^\infty}^{\f{2s}{1+s}}\left\|\f{u^\th}{r^s}(t,\cdot)\right\|_{L^p}^{\f{2}{1+s}}\left\|\nabla\f{u^r}{r}\right\|_{L^2}^{2-\f{6}{(1+s)p}}\left\|\nabla\f{u^r}{r}\right\|_{L^6}^{\f{6}{(1+s)p}}.
\]
By Lemma \ref{LEMET12} and the definition of $L$, one notes that
\[
\begin{split}
&\f{d}{dt}\|J(t,\cdot)\|_{L^2}^2+\|\nabla J(t,\cdot)\|_{L^2}^2\\
\leq& C_{s,p}\|\Gamma_0\|_{L^\infty}^{\f{2s}{1+s}}\left\|\f{u^\th}{r^s}(t,\cdot)\right\|_{L^p}^{\f{2}{1+s}}\left\|\O(t,\cdot)\right\|_{L^2}^{2-\f{6}{(1+s)p}}\left\|\O(t,\cdot)\right\|_{L^6}^{\f{6}{(1+s)p}}\\
\leq & C_{s,p}\|\Gamma_0\|_{L^\infty}^{\f{2s}{1+s}}\left\|\f{u^\th}{r^s}(t,\cdot)\right\|_{L^p}^{\f{2}{1+s}}\left(\left\|L(t,\cdot)\right\|_{L^2}+\|\mathcal{L}\rho(t,\cdot)\|_{L^2}\right)^{2-\f{6}{(1+s)p}}\left(\left\|L(t,\cdot)\right\|_{L^6}+\|\mathcal{L}\rho(t,\cdot)\|_{L^6}\right)^{\f{6}{(1+s)p}}.
\end{split}
\]
Applying the boundedness of the operator $\mathcal{L}$ in Lemma \ref{LEMET11}, together with the time-uniform estimate of $\rho$ in Lemma \ref{PROP2.1}, it follows that when $p>\f{3}{1+s}$:
\be\label{ESTJ1}
\begin{split}
&\f{d}{dt}\|J(t,\cdot)\|_{L^2}^2+\|\nabla J(t,\cdot)\|_{L^2}^2\\
\leq& C_{s,p}\|\Gamma_0\|_{L^\infty}^{\f{2s}{1+s}}\left\|\f{u^\th}{r^s}(t,\cdot)\right\|_{L^p}^{\f{2}{1+s}}\left(\left\|L(t,\cdot)\right\|_{L^2}+\|\rho_0\|_{L^2}\right)^{2-\f{6}{(1+s)p}}\left(\left\|\nabla L(t,\cdot)\right\|_{L^2}+\|\rho_0\|_{L^6}\right)^{\f{6}{(1+s)p}}\\
\leq & C_{ s,p,\Gamma_0}\left\|\f{u^\th}{r^s}(t,\cdot)\right\|_{L^p}^{\f{2p}{(1+s)p-3}}\left(\left\|L(t,\cdot)\right\|_{L^2}+\|\rho_0\|_{L^2}\right)^{2}+\f{1}{4}\left(\left\|\nabla L(t,\cdot)\right\|_{L^2}+\|\rho_0\|_{L^6}\right)^{2}\\
\leq &C_{ s,p,\Gamma_0}\left\|\f{u^\th}{r^s}(t,\cdot)\right\|_{L^p}^{\f{2p}{(1+s)p-3}}\left\|L(t,\cdot)\right\|_{L^2}^2+C_{ s,p,\rho_0,\Gamma_0}\left(\left\|\f{u^\th}{r^s}(t,\cdot)\right\|_{L^p}^{\f{2p}{(1+s)p-3}}+1\right)\\
 &+\f{1}{4}\left\|\nabla L(t,\cdot)\right\|_{L^2}^2.
\end{split}
\ee
 Similarly when $p=\f{3}{1+s}$, one derives
\be\label{ESTJ2}
\begin{split}
\f{d}{dt}\|J(t,\cdot)\|_{L^2}^2+\|\nabla J(t,\cdot)\|_{L^2}^2\leq& C_{ s,\Gamma_0}\left\|\f{u^\th}{r^s}(t,\cdot)\right\|_{L^{\f{3}{1+s}}}^{\f{2}{1+s}}\left\|\nabla L(t,\cdot)\right\|_{L^2}^2+C_{ s,\rho_0,\Gamma_0}\left\|\f{u^\th}{r^s}(t,\cdot)\right\|_{L^{\f{3}{1+s}}}^{\f{2}{1+s}}.
\end{split}
\ee
Therefore, when $p>\f{3}{s+1}$, \eqref{ESTL1} and \eqref{ESTJ1} imply that
\[
\begin{split}
&\f{d}{dt}\left(\|L(t,\cdot)\|_{L^2}^2+\|J(t,\cdot)\|_{L^2}^2\right)+\left(\|\nabla L(t,\cdot)\|_{L^2}^2+\|\nabla J(t,\cdot)\|_{L^2}^2\right)\\
\leq&C_{ s,p,\Gamma_0}\left\|\f{u^\th}{r^s}(t,\cdot)\right\|_{L^p}^{\f{2p}{(1+s)p-3}}\left(\|L(t,\cdot)\|_{L^2}^2+\|J(t,\cdot)\|_{L^2}^2\right)+C_{ s,p,\rho_0,\Gamma_0}\left(\left\|\f{u^\th}{r^s}(t,\cdot)\right\|_{L^p}^{\f{2p}{(1+s)p-3}}+1\right)\\
&+C_{ h_0,\rho_0}\left(\|\nabla u(t,\cdot)\|_{L^2}^2+(1+t)^2+\|L(t,\cdot)\|_{L^2}^2\right).
\end{split}
\]
Thus the condition \eqref{C1.305} and Gronwall inequality indicates \eqref{3.4VO}. Finally when $p=\f{3}{1+s}$, \eqref{ESTL2} and \eqref{ESTJ2} lead to
\be\label{elj1st01}
\begin{split}
&\f{d}{dt}\left(\|L(t,\cdot)\|_{L^2}^2+\|J(t,\cdot)\|_{L^2}^2\right)+\left(\|\nabla L(t,\cdot)\|_{L^2}^2+\|\nabla J(t,\cdot)\|_{L^2}^2\right)\\
\leq&C_{s,\Gamma_0}\left\|\f{u^\th}{r^s}(t,\cdot)\right\|_{L^{\f{3}{1+s}}}\left(\|\nabla L(t,\cdot)\|_{L^2}^2+\|\nabla J(t,\cdot)\|_{L^2}^2\right)+C_{s,\Gamma_0}\left\|\f{u^\th}{r^s}(t,\cdot)\right\|_{L^{\f{3}{1+s}}}^{\f{2}{1+s}}\left\|\nabla L(t,\cdot)\right\|_{L^2}^2\\
&+C_{s,\rho_0,\Gamma_0}\left\|\f{u^\th}{r^s}(t,\cdot)\right\|_{L^{\f{3}{1+s}}}^{\f{2}{1+s}}+C_{ h_0,\rho_0}\left(\|\nabla u(t,\cdot)\|_{L^2}^2+(1+t)^2+\|L(t,\cdot)\|_{L^2}^2\right).
\end{split}
\ee
Choosing $\ve_0=\left(4C_{s,\Gamma_0}\right)^{-\max\left\{1,\f{1+s}{2}\right\}}$, thus the first and second terms on the right hand of \eqref{elj1st01} can be absorbed by the left hand providing
\[
\left\|\f{u^\th}{r^s}\right\|_{L^\infty(0,T_*;L^{\f{3}{1+s}})}<\ve_0.
\]
Using Gronwall inequality, \eqref{3.4VO} also holds when $p=\f{3}{1+s}$.

\qed

\begin{corollary}\label{CORL}
Under the same conditions as Theorem \ref{th0}, we have
\bes
\sup_{0\leq t\leq T_*}\|\O(t,\cdot)\|_{L^2}^2<\infty.
\ees
\end{corollary}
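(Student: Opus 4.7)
The plan is to use the decomposition $\Omega = L + \mathcal{L}\rho$ coming from the very definition of $L$ in the reformulated system \eqref{REFORM}. By the triangle inequality,
\[
\|\Omega(t,\cdot)\|_{L^2} \leq \|L(t,\cdot)\|_{L^2} + \|\mathcal{L}\rho(t,\cdot)\|_{L^2}.
\]
The first summand is controlled uniformly in $t \in [0,T_*]$ thanks to the $L^\infty_T L^2$ bound already established in Lemma \ref{3.4VOL}.

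For the second summand I would invoke the $L^p$-boundedness of $\mathcal{L}$ stated in Lemma \ref{LEMET11} with $p=2$, to obtain $\|\mathcal{L}\rho(t,\cdot)\|_{L^2} \leq C\|\rho(t,\cdot)\|_{L^2}$. Then the maximum-principle estimate \eqref{4.2} from Lemma \ref{PROP2.1} gives $\|\rho(t,\cdot)\|_{L^2} \leq \|\rho_0\|_{L^2}$ uniformly in $t$.

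Combining these two bounds and taking the supremum over $t \in [0,T_*]$ yields the desired uniform-in-time $L^2$ estimate of $\Omega$. No delicate step is required here; this is a direct assembly of Lemma \ref{3.4VOL}, Lemma \ref{LEMET11}, and Lemma \ref{PROP2.1}, so there is essentially no obstacle to overcome.
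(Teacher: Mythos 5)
Your argument is correct and is exactly the paper's proof: write $\Omega = L + \mathcal{L}\rho$, bound $L$ in $L^\infty_{T}L^2$ via Lemma \ref{3.4VOL}, and bound $\mathcal{L}\rho$ via the $L^2$-boundedness of $\mathcal{L}$ from Lemma \ref{LEMET11} together with the transport estimate $\|\rho(t,\cdot)\|_{L^2}\leq\|\rho_0\|_{L^2}$ from Lemma \ref{PROP2.1}. Nothing is missing.
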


\pf In Lemma \ref{PROP2.1} and Lemma \ref{LEMET11}, $\mathcal{L}\rho$ satisfies:
\[
\|\mathcal{L}\rho(t,\cdot)\|_{L^2}^2\leq C\|\rho(t,\cdot)\|_{L^2}^2\leq C\|\rho_0\|_{L^2}^2<\infty,\quad\forall t\in[0,T_*].
\]
Thus the corollary is proved by noting the $L^\infty_TL^2$ boundedness of $L=\O-\mathcal{L}\rho$ in \eqref{3.4VO}.

\qed

\subsection{} \textbf{$\boldsymbol{L^\infty_TL^2\cap L^2_TH^1} $ estimate of $\boldsymbol{\nabla u}$} \\[2mm]

This part is devoted to the $L^\infty_TL^2\cap L^2_TH^1$ estimate of $\na u$, that is:
\begin{lemma}\label{3.55VOL}
Under the same conditions as Theorem \ref{th0}, the following a priori estimate of the gradient of the velocity holds:
\be\label{EVORTI}
\sup_{0\leq t\leq T_*}\|\na u(t,\cdot)\|_{L^2}^2+\int_0^{T_*}\|\nabla^2 u(t,\cdot)\|_{L^2}^2dt<\infty.
\ee
\end{lemma}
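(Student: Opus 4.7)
The plan is to reduce the estimate of $\|\nabla u\|_{L^\infty_{T_*}L^2 \cap L^2_{T_*}H^1}$ to that of the full vorticity $w = w^re_r + w^\th e_\th + w^z e_z$ via Biot-Savart \eqref{Biot1}, and to bound $w$ by successive $L^2$ energy estimates on the three scalar equations \eqref{EQVOR}, treating the toroidal component $w^\th$ first and then the meridional pair $\bar w := w^re_r + w^z e_z$.

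\textbf{Step 1 (toroidal component).} The $L^2$ inner product of $\eqref{EQVOR}_2$ with $w^\th$ yields
\[
\frac{1}{2}\frac{d}{dt}\|w^\th\|_{L^2}^2 + \|\nabla w^\th\|_{L^2}^2 + \|w^\th/r\|_{L^2}^2 = B_1 + B_2 + B_3 + B_4,
\]
where $B_1,\dots,B_4$ are the integrals of $\frac{u^r}{r}(w^\th)^2$, $\frac{\partial_z(u^\th)^2}{r}w^\th$, $-\frac{\partial_z(h^\th)^2}{r}w^\th$ and $-\partial_r\rho\cdot w^\th$. Using $\partial_z u^\th = -w^r$, $B_2$ rewrites as $-2\int \Gamma J\Omega\,dx$, bounded by $\|\Gamma_0\|_{L^\infty}\|J\|_{L^2}\|\Omega\|_{L^2}$ from Lemma \ref{PROP2.1}, Lemma \ref{3.4VOL}, and Corollary \ref{CORL}. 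For $B_1$, I would combine $\|u^r/r\|_{L^6}\lesssim \|\Omega\|_{L^2}$ (Lemma \ref{LEMET12} and Sobolev) with $\|w^\th\|_{L^{12/5}}^2 \lesssim \|w^\th\|_{L^2}^{3/2}\|\nabla w^\th\|_{L^2}^{1/2}$ (Gagliardo-Nirenberg) and Young. The critical terms $B_3$ and $B_4$, which seem to require gradient control on $h^\th$ and $\rho$ that is unavailable, are handled by integration by parts. For $B_3$, using $(h^\th)^2/r = rH^2$ and IBP in $z$ yields
\[
B_3 = \int rH^2\,\partial_z w^\th\,dx,\qquad \|rH^2\|_{L^2}\leq \|H_0\|_{L^\infty}\|h\|_{L^2},
\]
uniformly bounded by Lemma \ref{PROP2.1}. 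For $B_4$, IBP in $r$ (accounting for the cylindrical Jacobian) produces $B_4 = \int \rho\,\Omega\,dx + \int \rho\,\partial_r w^\th\,dx$, bounded by $\|\rho_0\|_{L^2}$. After absorbing the $\|\nabla w^\th\|_{L^2}^2$-contributions by Young, Gronwall delivers $w^\th\in L^\infty_{T_*}L^2\cap L^2_{T_*}H^1$.

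\textbf{Step 2 (meridional components and conclusion).} With $\|\nabla w^\th\|_{L^2}\in L^2_{T_*}$ in hand and $\|\nabla b\|_{L^3}\lesssim \|w^\th\|_{L^3}$ from \eqref{Biot2}, I sum the $L^2$ energy identities from $\eqref{EQVOR}_1$ and $\eqref{EQVOR}_3$ for $\bar w$:
\[
\frac{1}{2}\frac{d}{dt}\|\bar w\|_{L^2}^2 + \|\nabla \bar w\|_{L^2}^2 + \|w^r/r\|_{L^2}^2 = \int(\bar w\cdot\nabla)b\cdot\bar w\,dx.
\]
Gagliardo-Nirenberg and Sobolev give
\[
\Big|\int(\bar w\cdot\nabla)b\cdot\bar w\,dx\Big|\lesssim \|w^\th\|_{L^2}^{1/2}\|\nabla w^\th\|_{L^2}^{1/2}\|\bar w\|_{L^2}\|\nabla\bar w\|_{L^2},
\]
so Young yields $\frac{d}{dt}\|\bar w\|_{L^2}^2 + \|\nabla\bar w\|_{L^2}^2\lesssim \|\nabla w^\th\|_{L^2}\|\bar w\|_{L^2}^2$, closed by Gronwall using $\|\nabla w^\th\|_{L^2}\in L^2_{T_*}$. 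Combining with Step 1 gives $w\in L^\infty_{T_*}L^2\cap L^2_{T_*}H^1$, and \eqref{EVORTI} follows from Biot-Savart \eqref{Biot1}.

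The main obstacle throughout is handling $B_3$ and $B_4$ in Step 1, since neither $h$ nor $\rho$ admits a priori gradient regularity beyond their $L^p$-conservations. The decisive device is the pointwise transport bound $\|H(t,\cdot)\|_{L^\infty}\leq \|H_0\|_{L^\infty}$ for $H=h^\th/r$, combined with the IBP that moves the derivatives onto $w^\th$ so that the dissipation $\|\nabla w^\th\|_{L^2}^2$ can absorb them.
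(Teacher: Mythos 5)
Your proposal is correct and follows essentially the same route as the paper: first control $w^\th$ by integrating by parts in the $\rho$- and $h^\th$-forcing terms and exploiting the transported bounds on $H$ and $\Gamma$ together with the $L^\infty_TL^2$ control of $(\Omega,J)$ from Lemma \ref{3.4VOL} and Corollary \ref{CORL}, then close the meridional components by Gronwall, and finish with Biot--Savart \eqref{Biot1}. The only deviations are cosmetic: you bound the stretching term for $(w^r,w^z)$ via $\|\nabla b\|_{L^3}\lesssim\|w^\th\|_{L^3}$ and H\"older--interpolation, whereas the paper integrates by parts and uses $\int_0^{T_*}\|b(t,\cdot)\|_{L^\infty}^2\,dt<\infty$ obtained from \eqref{Biot2}; both close equally well once Step 1 gives $w^\th\in L^2_{T_*}H^1$.
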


\qed

To do this, we first estimate the horizontal angular component of the vorticity.
\subsubsection{}\textbf{Estimate of $\boldsymbol{w^\th}$}\\[1mm]
\q For \eqref{EQVOR}$_2$, we perform the standard $L^2$ inner product to derive
\[
\begin{split}
&\frac{1}{2}\frac{d}{dt}\|w^\th(t,\cdot)\|_{L^2}^2+\|\nabla w^\th(t,\cdot)\|_{L^2}^2+\left\|\frac{w^\th}{r}(t,\cdot)\right\|_{L^2}^2\\
=&\int_{\mathbb{R}^3}\frac{u^r}{r}(w^\th)^2dx+\int_{\mathbb{R}^3}\p_z\frac{(u^\th)^2}{r}w^\th dx-\int_{\mathbb{R}^3}\p_r\rho w^\th dx-\int_{\mathbb{R}^3}\p_z\frac{(h^\th)^2}{r}w^\th dx\\
:=&I_1+I_2+I_3+I_4.
\end{split}
\]

Now we estimate $I_i$, $i=1,2,3,4$ separately. By H\"{o}lder inequality, Young inequality and Gagliardo-Nirenberg inequality, we have
\[
\begin{split}
I_1\leq&\|u^r(t,\cdot)\|_{L^3}\left\|\frac{w^\th}{r}(t,\cdot)\right\|_{L^2}\|w^\th(t,\cdot)\|_{L^6}\\
\leq&C\|u^r(t,\cdot)\|_{L^3}\|\Omega(t,\cdot)\|_{L^2}\|\nabla w^\th(t,\cdot)\|_{L^2}\\
\leq&C\|u^r(t,\cdot)\|_{L^3}^2\|\O(t,\cdot)\|_{L^2}^2+\frac{1}{8}\|\nabla w^\th(t,\cdot)\|_{L^2}^2\\
\leq&C\|u^r(t,\cdot)\|_{L^2}\|\nabla u^r(t,\cdot)\|_{L^2}\|\O(t,\cdot)\|_{L^2}^2+\frac{1}{8}\|\nabla w^\th(t,\cdot)\|_{L^2}^2,
\end{split}
\]
and
\[
\begin{split}
I_2&=2\int_{\mathbb{R}^3}\f{\p_zu^\th}{r}u^\th w^\th dx\\
&\leq 2\|J(t,\cdot)\|_{L^2}\|w^\th(t,\cdot)\|_{L^6}\|u^\th(t,\cdot)\|_{L^3}\\
&\leq C\|J(t,\cdot)\|_{L^2}\|\nabla w^\th(t,\cdot)\|_{L^2}\|u^\th(t,\cdot)\|_{L^3}\\
&\leq C\|u^\th(t,\cdot)\|_{L^3}^2\|J(t,\cdot)\|_{L^2}^2+\frac{1}{8}\|\nabla w^\th(t,\cdot)\|_{L^2}^2\\
&\leq C\|u^\th(t,\cdot)\|_{L^2}\|\nabla u^\th(t,\cdot)\|_{L^2}\|J(t,\cdot)\|_{L^2}^2+\frac{1}{8}\|\nabla w^\th(t,\cdot)\|_{L^2}^2.
\end{split}
\]
Meanwhile, one derives the following for $I_3$:
\[
\begin{split}
I_3=&-2\pi\int_{-\infty}^\infty\int_0^\infty\p_r\rho w^\th rdrdz\\
=&2\pi\int_{-\infty}^\infty\int_0^\infty\rho\p_r(w^\th r)drdz\\
=&2\pi\int_{-\infty}^\infty\int_0^\infty\rho\p_rw^\th rdrdz+2\pi\int_{-\infty}^\infty\int_0^\infty\rho\frac{w^\th}{r}rdrdz\\
\leq&\|\rho(t,\cdot)\|_{L^2}\|\nabla w^\th(t,\cdot)\|_{L^2}+\|\rho(t,\cdot)\|_{L^2}\left\|\frac{w^\th}{r}(t,\cdot)\right\|_{L^2}\\
\leq&C\|\rho(t,\cdot)\|_{L^2}^2+\frac{1}{8}\left(\|\nabla w^\th(t,\cdot)\|_{L^2}^2+\left\|\frac{w^\th}{r}(t,\cdot)\right\|_{L^2}^2\right),
\end{split}
\]
also similarly for $I_4$:
\[
\begin{split}
I_4=&\int_{\mathbb{R}^3}\frac{(h^\th)^2}{r}\p_zw^\th dx\\
\leq&\|H(t,\cdot)\|_{L^\infty}\|h^\th(t,\cdot)\|_{L^2}\|\nabla w^\th(t,\cdot)\|_{L^2}\\
\leq&C\|H(t,\cdot)\|_{L^\infty}^2\|h^\th(t,\cdot)\|_{L^2}^2+\frac{1}{8}\|\nabla w^\th(t,\cdot)\|_{L^2}^2.
\end{split}
\]
The above estimates for $I_i$, $i=1,2,3,4$ along with Lemma \ref{PROP2.1} indicate that
\[
\begin{split}
&\frac{d}{dt}\|w^\th(t,\cdot)\|_{L^2}^2+\|\nabla w^\th(t,\cdot)\|_{L^2}^2+\left\|\frac{w^\th}{r}(t,\cdot)\right\|_{L^2}^2\\
\leq&C\left(\|u^r(t,\cdot)\|_{L^2}\|\nabla u^r(t,\cdot)\|_{L^2}\|\O(t,\cdot)\|_{L^2}^2+\|u^\th(t,\cdot)\|_{L^2}\|\nabla u^\th(t,\cdot)\|_{L^2}\|J(t,\cdot)\|_{L^2}^2\right.\\
&\left.+\|\rho(t,\cdot)\|_{L^2}^2+\|H(t,\cdot)\|_{L^\infty}^2\|h^\th(t,\cdot)\|_{L^2}^2\right).\\
\end{split}
\]

Integrating with $t$ on $[0,T_*]$, the following final inequality follows from the $L^\infty_TL^2$ estimates of $u$, $h$ and $\rho$, together with $L^\infty_TL^\infty$ estimate of $H$ in Lemma \ref{PROP2.1}, and $L^\infty_TL^2$ estimate of $(\O, J)$ in  Lemma \ref{3.4VOL} and Corollary \ref{CORL}. That is:
\be\label{6.8}
\begin{split}
&\sup_{0\leq t\leq T_*}\|w^\th(t,\cdot)\|_{L^2}^2+\int_0^{T_*}\|\nabla w^\th(t,\cdot)\|_{L^2}^2dt+\int_0^{T_*}\left\|\frac{w^\th}{r}(t,\cdot)\right\|_{L^2}^2dt\\
\les&\sup_{0\leq t\leq T_*}\|u(t,\cdot)\|_{L^2}\sup_{0\leq t\leq T_*}\left(\|\O(t,\cdot)\|_{L^2}^2+\|J(t,\cdot)\|_{L^2}^2\right)\int_0^{T_*}\|\nabla u(t,\cdot)\|_{L^2}^2dt+T_*\|\rho_0\|_{L^2}^2\\
&+\|H_0\|_{L^\infty}^2T_*\sup_{0\leq t\leq T_*}\|h^\th(t,\cdot)\|_{L^2}^2\\
<&\infty.
\end{split}
\ee
\subsubsection{}\textbf{Estimate of $\boldsymbol{w^r}$ and $\boldsymbol{w^z}$}\\[1mm]
\q We multiply \eqref{EQVOR}$_1$ by $w^r$ and integrate over $\mathbb{R}^3$ to derive
\be\label{6.9}
\bali
&\frac{1}{2}\frac{d}{dt}\|w^r(t,\cdot)\|_{L^2}^2+\|\nabla w^r(t,\cdot)\|_{L^2}^2+\left\|\frac{w^r}{r}(t,\cdot)\right\|_{L^2}^2\\
=&\int_{\mathbb{R}^3}w^r(w^r\p_r+w^z\p_z)u^rdx\\
=&-\int_{\mathbb{R}^3}u^r(w^r\p_r+w^z\p_z)w^rdx\\
\leq\,&\|u^r(t,\cdot)\|_{L^\infty}\left(\|w^r(t,\cdot)\|_{L^2}\|\p_rw^r(t,\cdot)\|_{L^2}+\|w^z(t,\cdot)\|_{L^2}\|\p_zw^r(t,\cdot)\|_{L^2}\right)\\
\leq\,&\frac{1}{4}\|\nabla w^r(t,\cdot)\|_{L^2}^2+C\|u^r(t,\cdot)\|_{L^\infty}^2\left(\|w^r(t,\cdot)\|_{L^2}^2+\|w^z(t,\cdot)\|_{L^2}^2\right).
\eali
\ee

Here the last three lines follow from the integration by parts, H\"{o}lder inequality and Young inequality. Meanwhile, by a similar performance on \eqref{EQVOR}$_3$, one has
\be\label{6.10}
\begin{split}
&\frac{1}{2}\frac{d}{dt}\|w^z(t,\cdot)\|^2_{L^2}+\|\nabla w^z(t,\cdot)\|_{L^2}^2\\
=&\int_{\mathbb{R}^3}w^z(w^r\p_r+w^z\p_z)u^zdx\\
=&-\int_{\mathbb{R}^3}u^z(w^r\p_r+w^z\p_z)w^zdx\\
\leq&\|u^z(t,\cdot)\|_{L^\infty}\left(\|w^r(t,\cdot)\|_{L^2}\|\p_rw^z(t,\cdot)\|_{L^2}+\|w^z(t,\cdot)\|_{L^2}\|\p_zw^z(t,\cdot)\|_{L^2}\right)\\
\leq&\frac{1}{4}\|\nabla w^z(t,\cdot)\|_{L^2}^2+C\|u^z(t,\cdot)\|_{L^\infty}^2\left(\|w^r(t,\cdot)\|_{L^2}^2+\|w^z(t,\cdot)\|_{L^2}^2\right).
\end{split}
\ee

Summing up \eqref{6.9} and \eqref{6.10} and applying Gronwall inequality, one derives
\be\label{6.11}
\begin{split}
&\sup_{0\leq t\leq T_*}\left\|\left(w^r,w^z\right)(t,\cdot)\right\|_{L^2}^2+\int_0^{T_*}\left(\left\|\left(\nabla w^r(t,\cdot),\nabla w^z(t,\cdot)\right)\right\|+\left\|\frac{w^r}{r}(t,\cdot)\right\|_{L^2}\right)dt\\
\leq&\left\|\left(w^r_0,w^z_0\right)\right\|_{L^2}^2\exp\left(C\int_0^{T_*}\|b(t,\cdot)\|_{L^\infty}^2dt\right).
\end{split}
\ee

Finally, it remains to estimate the part inside the exponential function on the right-hand-side of \eqref{6.11}. Using Gagliardo-Nirenberg interpolation inequality, \eqref{Biot2} and H\"{o}lder inequality, together with estimates \eqref{4.3} and \eqref{6.8}, one has
\[
\begin{split}
&\int_0^{T_*}\|b(t,\cdot)\|^2_{L^\infty}dt\\
&\leq C\int_0^{T_*}\|\nabla b(t,\cdot)\|_{L^2}\|\nabla^2b(t,\cdot)\|_{L^2}dt\\
&\leq C\int_0^{T_*}\|\nabla u(t,\cdot)\|_{L^2}\left(\|\nabla w^\th(t,\cdot)\|_{L^2} +\left\|\frac{w^\th}{r}(t,\cdot)\right\|_{L^2}\right)dt\\
&\leq C\left(\int_0^{T_*}\|\nabla u(t,\cdot)\|^2_{L^2}ds\right)^{1/2}\left(\int_0^{T_*}\left(\|\nabla w^\th(t,\cdot)\|^2_{L^2} +\left\|\frac{w^\th}{r}(t,\cdot)\right\|^2_{L^2}\right)dt\right)^{1/2}<\infty.
\end{split}
\]
Inserting the above estimate in \eqref{6.11}, we have
\be\label{EWRZ}
\begin{split}
\sup_{0\leq t\leq T_*}\left\|\left(w^r,w^z\right)(t,\cdot)\right\|_{L^2}^2+\int_0^{T_*}\left(\left\|\nabla\left( w^r,w^z\right)(t,\cdot)\right\|_{L^2}^2+\left\|\frac{w^r}{r}(t,\cdot)\right\|^2_{L^2}\right)dt<\infty.
\end{split}
\ee
Concluding \eqref{6.8} and \eqref{EWRZ}, we have the $L_T^\infty L^2\cap L^2_{T}H^1$ estimate for the vorticity. Then using \eqref{Biot1}, \eqref{EVORTI} follows.

\subsection{}\textbf{$\boldsymbol{L^1_TL^\infty}$ estimate of $\boldsymbol{\nabla u}$}\\[2mm]

Recall the equation for the vorticity:
\bes
\lt\{
\begin{aligned}
&\p_t w-\Delta w=\nabla\times(u\cdot\nabla u)-\nabla\times(h\cdot\nabla h)+\nabla\times(\rho e_3);\\[4mm]
&w(0,x)=\nabla\times u_0(x).
\end{aligned}
\rt.
\ees
For the further convenience, we split $w$ into three parts:
\[
w:=w_0+w_1+w_2,
\]
where $w_0$ solves the linear parabolic equation with the initial value $\nabla\times u_0(x)$:
\bes
\lt\{
\begin{aligned}
&\p_t w_0-\Delta w_0=0;\\[4mm]
&w(0,x)=\nabla\times u_0(x).
\end{aligned}
\rt.\ees
Clearly, when $t>0$, $w_0$ is regular enough for our argument in this paper, so we only need to consider the rest parts. Meanwhile, $w_1$ and $w_2$, which have homogeneous initial  data, satisfy
\bes
\p_t w_1-\Delta w_1=-\nabla\times(h\cdot\nabla h)
\ees
and
\bes
\p_t w_2-\Delta w_2=\nabla\times(u\cdot\nabla u)+\nabla\times(\rho e_3),
\ees
respectively.

Now we \textbf{claim that}
\be\label{ENU}
\boldsymbol{\nabla u\in L^1\left(0,T_*;L^\infty(\mathbb{R}^3)\right)}.
\ee
To prove it, we first observe that
\[
h\cdot\nabla h=-\f{(h^\th)^2}{r}e_r=-Hh^\th e_r,
\]
since $h=h^\th(t,r,z)e_\th$. Noting that
\be\label{3.122}
H\in L^\infty\left(0,T_*;L^\infty(\mathbb{R}^3)\right)
\ee
follows from \eqref{4.2} in Lemma \ref{PROP2.1}, the following estimate of $h^\th$ holds by performing $L^4$ inner product of $h^\th${\blue :}
\[\begin{aligned}
\frac{d}{d t}\left\|h^{\theta}(t,\cdot)\right\|_{L^{4}}^{4} & \leq 4 \left|\int_{\mathbb{R}^{3}} \frac{u^{r}}{r}\left(h^{\theta}\right)^{4} d x\right| \\
& \leq 4\|H\|_{L^{\infty}} \int_{\mathbb{R}^{3}}\left|u^{r}\right||h^{\theta}|^{3} d x \\
& \leq 4\left\|H_{0}\right\|_{L^{\infty}}\left\|u^{r}(t,\cdot)\right\|_{L^{4}}\left\|h^{\theta}(t,\cdot)\right\|_{L^{4}}^{3} \\
& \leq C\left\|H_{0}\right\|_{L^{\infty}}\left\|\nabla u^{r}(t,\cdot)\right\|_{L^{2}}^{3 / 4}\left\|u^{r}(t,\cdot)\right\|_{L^{2}}^{1 / 4}\left\|h^{\theta}(t,\cdot)\right\|_{L^{4}}^{3}.
\end{aligned}\]
Integration from $0$ to $t$ on time for $t\in(0,T_*]$, one derives
\be\label{3.133}
\begin{split}
&\q\sup_{0\leq t\leq T_*}\|h^\th(t,\cdot)\|_{L^4}\\
&\leq \|h^\th_0\|_{L^4}+C\|H_0\|_{L^\infty}\sup_{0\leq t\leq T_*}\|u^r(t,\cdot)\|_{L^2}^{1/4}\int_0^{T_*}\left\|\nabla u^{r}(t,\cdot)\right\|_{L^{2}}^{3 / 4}dt\\
&\leq \|h^\th_0\|_{L^4}+C\|H_0\|_{L^\infty}\sup_{0\leq t\leq T_*}\|u^r(t,\cdot)\|_{L^2}^{1/4}\left(\int_0^{T_*}\left\|\nabla u(t,\cdot)\right\|_{L^{2}}^{2}dt\right)^{3/8}{T_*}^{5/8}\\
&<\infty.
\end{split}
\ee
Combining \eqref{3.122} and \eqref{3.133}, we find
\[
h\cdot\nabla h\in L^\infty\left(0,T_*;L^{4}(\mathbb{R}^3)\right)\subset L^{4/3}\left(0,T_*;L^{4}(\mathbb{R}^3)\right).
\]
%Denoting $\tilde{s}=\min\{s,4/3\}$ and $\tilde{r}_0=\min\{r_0,4\}$, we have
Then $\nabla w_1$ satisfies
\be\label{Ew1}
\nabla w_1\in L^{4/3}\left(0,T_*;L^{4}(\mathbb{R}^3)\right)
\ee
by applying \eqref{2.10PR}, the maximal regularity for the heat flow in Lemma \ref{MRP}. To treat $w_2$, by interpolating $L^2_TH^1$ and $L^\infty_TL^2$ as shown in \eqref{Vorti222} of Lemma \eqref{LEM2.3}, we arrive
\be\label{3.14}
\nabla u\in L^{8/3}\left(0,T_*;L^{4}(\mathbb{R}^3)\right).
\ee
Also we have the following interpolation inequality by Lemma \ref{LEMGN}:
\[
\|u(t,\cdot)\|_{L^\infty}\lesssim\|\nabla u(t,\cdot)\|_{L^{4}}^{6/7}\|u(t,\cdot)\|_{L^2}^{1/7}.
\]
%Denoting that
%\[
%\f{1}{\tilde{s}}=\f{1}{s}-\f{1}{s_0},
%\]
Then considering the fundamental energy estimate \eqref{4.3}, it follows that
\be\label{3.15}
\begin{split}
\int_0^{T_*}\|u(t,\cdot)\|_{L^\infty}^{8/3}dt&\lesssim\|u\|_{L^\infty(0,T_*;L^2)}^{8/21}\int_0^{T_*}\|\nabla u(t,\cdot)\|_{L^4}^{16/7}dt\\
&\lesssim\| u\|_{L^\infty(0,T_*;L^2)}^{8/21}\left(\int_0^{T_*}\|\nabla u(t,\cdot)\|_{L^{4}}^{8/3}dt\right)^{6/7}T_*^{1/7}<\infty.\\
\end{split}
\ee
%Here the first inequality holds due to \eqref{E000} in Corollary \ref{COR2.2}.
Then \eqref{3.14} and \eqref{3.15} assert that
\[
u\cdot\nabla u\in L^{4/3}\left(0,T_*;L^{4}(\mathbb{R}^3)\right).
\]
Meanwhile, by \eqref{4.2}, it is clear that
\[
\rho\in L^{\infty}(0,T_*;L^{4}(\mathbb{R}^3))\subset L^{4/3}(0,T_*;L^{4}(\mathbb{R}^3)).
\]
Following from \eqref{2.10PR} in Lemma \ref{MRP}, it is clear that
\be\label{Ew2}
\nabla w_2\in L^{4/3}\left(0,T_*;L^{4}(\mathbb{R}^3)\right).
\ee
Then \eqref{Ew2}, together with \eqref{Ew1}, imply that
\be\label{ESTW1}
\nabla w\in L^{4/3}\left(0,T_*;L^{4}(\mathbb{R}^3)\right).
\ee
Now the interpolation inequality in Lemma \ref{LEMGN}, together with the lower order estimate of $w$ in \eqref{3.4VO} and \eqref{Biot1}, assert that
\[
\begin{split}
\|\nabla u(t,\cdot)\|_{L^\infty}&\lesssim\|\nabla u(t,\cdot)\|^{1/7}_{L^{2}}\|\nabla^2u(t,\cdot)\|^{6/7}_{L^{4}}\\
&\lesssim\|w(t,\cdot)\|^{1/7}_{L^{2}}\|\nabla w(t,\cdot)\|^{6/7}_{L^{4}}.
\end{split}
\]
Then using \eqref{ESTW1}, we find the claim is proved since
\[
\begin{split}
\int_0^{T_*}\|\nabla u(t,\cdot)\|_{L^\infty}dt&\lesssim\|w\|^{1/7}_{L^\infty(0,T_*;L^2)}\int_0^{T_*}\|\nabla w(t,\cdot)\|_{L^{4}}^{6/7}dt\\
&\leq\|w\|^{1/7}_{L^\infty(0,T_*;L^2)}\left(\int_0^{T_*}\|\nabla w(t,\cdot)\|_{L^{4}}^{4/3}dt\right)^{14/9}T_*^{\,5/14}<\infty.
\end{split}
\]

\subsection{}\textbf{$\boldsymbol{L^1_TL^\infty}$ estimate of $\boldsymbol{\nabla\times h}$}\\[2mm]

Let $j:=\nabla\times h$. By $h=h^\th(t,r,z)e_\th$, it follows that
\[
j=j^r(t,r,z)e_r+j^z(t,r,z)e_z
\]
is an axially symmetric swirl-free vector field with
\[
j^r=-\p_zh^\th,\quad j^z=\p_rh^\th+\f{h^\th}{r}.
\]
Taking derivative of $\eqref{MHD-BOUS}_4$, noting the divergence-free condition of $u$, one obtains
\be\label{EQVORM1}
\lt\{
\begin{aligned}
&\p_t j^r+(u^r\p_r+u^z\p_z)j^r=-\left(\p_ru^r+2\p_zu^z\right)j^r+\p_zu^rj^z-2\p_zu^rH;\\[4mm]
&\p_t j^z+(u^r\p_r+u^z\p_z)j^z=\p_ru^zj^r-\left(2\p_ru^r+\p_zu^z\right)j^z+\left(4\p_ru^r+2\p_zu^z\right)H.
\end{aligned}
\rt.
\ee

 Before we perform the $L^1_TL^\infty$-estimate of $\nabla\times h$, we denote $X(t,\cdot):\,\mathbb{R}^3\to\mathbb{R}^3$ the particle trajectory mapping of the velocity $b$, which solves the initial value problem:
\bes
\f{\p X(t,\zeta)}{\p t}=b(t,X(t,\zeta)),\quad X(0,\zeta)=\zeta.
\ees
Integrating \eqref{EQVORM1} along the particle trajectory mapping, we have
%\be\label{EJR}
\[
j^r(t,X(t,\zeta))=j^r_0(\zeta)+\int_0^t\left[-\left(\p_ru^r+2\p_zu^z\right)j^r+\p_zu^rj^z-2\p_zu^rH\right](s,X(s,\zeta))ds;
\]
%\be\label{EJZ}
\[
j^z(t,X(t,\zeta))=j^z_0(\zeta)+\int_0^t\left[\p_ru^zj^r-\left(2\p_ru^r+\p_zu^z\right)j^z+\left(4\p_ru^r+2\p_zu^z\right)H\right](s,X(s,\zeta))ds.
\]
Taking the $L^\infty$ norm over $\zeta\in\mathbb{R}^3$, noting the estimate of $H$ in \eqref{4.2}, it follows that
\bes
\begin{split}
\|(j^r,j^z)(t,\cdot)\|_{L^\infty}\leq&\|(j^r_0\,,\,j^z_0)\|_{L^\infty}+C\int_0^t\|{\nabla}b(s,\cdot)\|_{L^\infty}\left(\|(j^r,j^z)(s,\cdot)\|_{L^\infty}+\|H(s,\cdot)\|_{L^\infty}\right)ds\\
\leq&\|(j^r_0\,,\,j^z_0)\|_{L^\infty}+C\|H_0\|_{L^\infty}\int_0^t\|{\nabla}b(s,\cdot)\|_{L^\infty}ds\\
&+C\int_0^t\|{\nabla}b(s,\cdot)\|_{L^\infty}\|(j^r,j^z)(s,\cdot)\|_{L^\infty}ds.
\end{split}
\ees
Applying \eqref{ENU} and Gronwall inequality, it follows that
\[
\begin{split}
&\|(j^r,j^z)(t)\|_{L^\infty}\\
\leq&\left(\|(j^r_0\,,\,j^z_0)\|_{L^\infty}+C\|H_0\|_{L^\infty}\int_0^t\|{\nabla}b(s,\cdot)\|_{L^\infty}ds\right)\exp\left(C\int_0^t\|{\nabla}b(s,\cdot)\|_{L^\infty}ds\right)\\
\leq&\left(\|(j^r_0\,,\,j^z_0)\|_{L^\infty}+C\|H_0\|_{L^\infty}\int_0^t\|\nabla u(s,\cdot)\|_{L^\infty}ds\right)\exp\left(C\int_0^t\|{\nabla}u(s,\cdot)\|_{L^\infty}ds\right)\\
\end{split}
\]
holds for any $t\in(0,T_*]$. This implies
\be\label{ENH}
\int_0^{T_*}\|\nabla\times h(t,\cdot)\|_{L^\infty}dt=\int_0^{T_*}\|(j^r,j^z)(t,\cdot)\|_{L^\infty}dt<\infty,
\ee
which finishes the proof of the desired estimate.

\subsection{} \textbf{$\boldsymbol{L^1_TL^\infty}$ estimate of $\boldsymbol{\nabla \rho}$}\ \\

Now it remains to esitmate $\nabla\rho$. Taking $\nabla$ to $\eqref{MB}_3$, we know that
\[
\p_t\nabla\rho+u\cdot\nabla\nabla\rho=-\nabla u\cdot\nabla\rho.
\]
The routine $L^\infty$ estimate follows that
\[
\|\nabla\rho(t,\cdot)\|_{L^\infty}\leq\|\nabla\rho_0\|_{L^\infty}+\int_0^t\|\nabla u(s,\cdot)\|_{L^\infty}\|\nabla\rho(s,\cdot)\|_{L^\infty}ds.
\]
By Gronwall inequality and using \eqref{ENU}, we arrive
\be\label{ERHO}
\sup_{0\leq t\leq T_*}\|\nabla\rho(t,\cdot)\|_{L^\infty}\leq\|\nabla\rho_0\|_{L^\infty}\exp\left(\int_0^{T_*}\|\nabla u(s,\cdot)\|_{L^\infty}ds\right)<\infty.
\ee

\subsection{Estimates of higher order norms \& proof of Theorem \ref{th0}}\ \\

Combining \eqref{ENU}, \eqref{ENH} and \eqref{ERHO}, we have
\be\label{e1stinf}
\int_0^{T_*}\|(\nabla\times u,\nabla\times h)(t,\cdot)\|_{L^\infty}dt+\int_0^{T_*}\|\nabla\rho(t,\cdot)\|_{L^\infty}dt<\infty.
\ee

 We now show $H^m\ (m\geq 3)$ regularity of the solution by using the above inequality. We note that the proof below is still valid for the case that the viscous coefficient $\mu=0$.

Apply $\na^m$ $(m\in\mathbb{N},\,\,m\geq3)$ to \eqref{MB}$_{1,2,3}$ to derive that
\begin{equation}\label{MB3}
\left\{
\begin{aligned}
&
\p_t\na^mu+u\cdot\nabla\na^mu+\nabla\na^mp-\mu\Delta\na^mu\\
&\hskip 1cm \qq\qq\qq=h\cdot\nabla\na^mh+\na^m(\rho e_3)-[\na^m,u\cdot\nabla]u+[\na^m,h\cdot\nabla]h,\\[4mm]
&\p_t\na^mh+u\cdot\nabla\na^m h-h\cdot\nabla\na^m u=-[\na^m,u\cdot\nabla]h+[\na^m,h\cdot\nabla]u,\\[4mm]
&\p_t\na^m\rho+u\cdot\na\na^m\rho=-[\na^m,u\cdot\nabla]\rho.\\
\end{aligned}
\right.
\end{equation}
Performing the $L^2$ energy estimate of \eqref{MB3}, noting that
\[
\int_{\mathbb{R}^3}h\cdot\nabla\nabla^mh\cdot\nabla^mudx+\int_{\mathbb{R}^3}h\cdot\nabla\nabla^mu\cdot\nabla^mhdx=0,
\]
we have
\[
\bali
\frac{1}{2}\frac{d}{dt}&\left\|\na^m (u,h,\rho)(t,\cdot)\right\|_{L^2}^2+\mu\left\|\na^{m+1}u(t,\cdot)\right\|_{L^2}^2\\
=&-\int_{\mathbb{R}^3}[\na^m,u\cdot\nabla]u\na^mudx+\int_{\mathbb{R}^3}[\na^m,h\cdot\nabla]h\na^mudx-\int_{\mathbb{R}^3}[\na^m,u\cdot\nabla]h\na^mhdx\\
&+\int_{\mathbb{R}^3}[\na^m,h\cdot\nabla]u\na^mhdx-\int_{\mathbb{R}^3}[\na^m,u\cdot\nabla]\rho\na^m\rho dx+\int_{\mathbb{R}^3}\na^m(\rho e_3)\na^mudx.
\eali
\]
By Lemma \ref{LEMET1}, the above equation implies
\be\label{ESTH}
\bali
&\frac{d}{dt}\left\|\na^m (u,h,\rho)(t,\cdot)\right\|_{L^2}^2+\mu\left\|\na^{m+1}u(t,\cdot)\right\|_{L^2}^2\\
\lesssim&\|\nabla^m(u,h,\rho)(t,\cdot)\|_{L^2}^2\left(\|\nabla(u,h,\rho)(t,\cdot)\|_{L^\infty}+1\right).
\eali
\ee
By denoting
\[
E_m(t):=\sup\limits_{0\leq s\leq t}\|\na^m(u,h,\rho)(s,\cdot)\|^2_{L^2},\q  0\leq t<T_*,\quad m\geq3,
\]
\eqref{ESTH}, together with \eqref{2.29b} in Corollary \ref{COR2.6}, \eqref{4.2}$_3$ and \eqref{4.3} in Lemma \ref{PROP2.1}, indicate that
\[%\label{4.20c}
\bali
&\f{d}{dt}\|\na^m(u,h,\rho)(t,\cdot)\|^2_{L^2}+\mu\|\na^{m+1} u(t,\cdot)\|^2_{L^2}\\
\les&\left(1+\|(\na\times u,\nabla\times h,\nabla\rho)(t,\cdot)\|_{BMO }\log(e+E_m(t))\right)(e+E_m(t))\\
\les&\left(1+\|(\na\times u,\nabla\times h,\nabla\rho)(t,\cdot)\|_{L^\infty}\log(e+E_m(t))\right)(e+E_m(t)).
\eali
\]
Integrating the above inequality over $(0,\,t)$, where $t\in[0,\,T_*)$, one has
\[
\bali
&e+\|\na^m(u,\,h,\,\rho)(t)\|_{L^2}^2\\
\les& e+\|\na^m(u_0,\,h_0,\,\rho_0)\|_{L^2}^2\\
&+\int_{0}^t\Big\{1+\|(\na\times u,\nabla\times h,\nabla\rho)(s,\cdot)\|_{L^\infty}\log\left(e+E_m(s)\right)\Big\}\left(e+E_m(s)\right)ds,
\eali
\]
which implies
\[
\bali
e+E_m(t)\les& e+\|\na^m(u_0,\,h_0,\,\rho_0)\|_{L^2}^2\\
&+\int_{0}^t\Big\{1+\|(\na\times u,\nabla\times h,\nabla\rho)(s,\cdot)\|_{L^\infty}\log\left(e+E(s)\right)\Big\}\left(e+E(s)\right)ds.
\eali
\]
Using Gronwall inequality twice, it follows that
\[
e+E_m(t)\leq  C_{0,T_*}\left(e+\|\na^m(u_0,\,h_0,\,\rho_0)\|_{L^2}^2\right)^{\exp\left( C_{0,T_*}\int^t_{0}\left(1+\|(\na\times u,\nabla\times h,\nabla\rho)(s,\cdot)\|_{L^\infty}\right)ds\right)},\quad \forall t\in[0,T_*),
\]
 where $C_{0,T_*}>0$ is a constant depends on initial data and $T_*$. Hence $(u,h,\rho)$ can be regularly extended beyond $T_*$ under the  condition \eqref{e1stinf}. This completes the proof of Theorem \ref{th0}.

%\subsection{}\textbf{Final steps in proving Theorem \ref{th0}}\\[2mm]
%We refer readers to \cite{LiPan2020-1} for detailed proof of this section. Here we only outline in the sense of completeness. Using the conclusions in the previous sections, we may first achieve a $L^1_TL^\infty$ estimate of the spacial derivatives of the velocity by using the maximal regularity of the heat flow:
%\[
%\int_0^{T_*}\|\nabla u(t,\cdot)\|_{L^\infty}dt<\infty,
%\]
%then it follows the same estimates for $\nabla\times h$ and $\nabla\rho$:
%\[
%\int_0^{T_*}\|(\nabla\times h,\nabla\rho)(t,\cdot)\|_{L^\infty}dt<\infty.
%\]
%Finally, based on those preparations, the higher-order estimates of the solution can be established by using commutator estimates and Gronwall inequality
%\[
%\sup_{0\leq t\leq T_*}\|\na^m(u,\,h,\,\rho)(t,\cdot)\|_{L^2}^2\leq \left(e+\|\na^m(u_0,\,h_0,\,\rho_0)\|_{L^2}^2\right)^{\exp\left(\int^t_{0}\left(1+\|(\na\times u,\nabla\times h,\nabla\rho)(s,\cdot)\|_{L^\infty}\right)ds\right)}
%\]
%for any $m\in\mathbb{N}$ and $m\geq 3$, which proves the solution can be smoothly extended beyond $T_*$.
\bibliographystyle{plain}

%\bibliography{/Users/dykim/Dropbox/01Research/oralpaper}

%%%%%%%%%%%%%%%%%%%%%%%%%%%%%%%%%%%%
%\begin{comment}

\def\cprime{$'$}

%\end{comment}
%%%%%%%%%%%%%%%%%%%%%%%%%%%%%%%%%%%%

\end{document}